\pgfplotsset{compat=1.17}
\title[On a time-frequency blurring operator]{On a time-frequency blurring operator with applications in data augmentation}
\author{Simon Halvdansson}
\address{Department of Mathematical Sciences, Norwegian University of Science and Technology, 7491 Trondheim, Norway.}
\email{simon.halvdansson@ntnu.no}
\date{\monthyeardate\today}
\theoremstyle{plain}
\newtheorem{theorem}{Theorem}[section]
\newtheorem*{theorem*}{Theorem}
\newtheorem{lemma}[theorem]{Lemma}
\newtheorem{proposition}[theorem]{Proposition}
\theoremstyle{definition}
\theoremstyle{remark}
\newcommand{\C}{\mathbb{C}}
\newcommand{\R}{\mathbb{R}}
\newcommand{\supp}{\operatorname{supp}}
\newcommand{\vast}{\bBigg@{4}}
\newcommand{\Vast}{\bBigg@{5}}
\DeclareFontFamily{U}{mathx}{\hyphenchar\font45}
\DeclareFontShape{U}{mathx}{m}{n}{
      <5> <6> <7> <8> <9> <10>
      <10.95> <12> <14.4> <17.28> <20.74> <24.88>
      mathx10
      }{}
\DeclareSymbolFont{mathx}{U}{mathx}{m}{n}
\DeclareMathAccent{\widecheck}{0}{mathx}{"71}
\DeclareMathAccent{\wideparen}{0}{mathx}{"75}
\def\XXint#1#2#3{{\setbox0=\hbox{$#1{#2#3}{\int}$ }
		\vcenter{\hbox{$#2#3$ }}\kern-.6\wd0}}
\begin{document}
    \maketitle
    \begin{abstract}\vspace{-6mm}
    Inspired by the success of recent data augmentation methods for signals which act on time-frequency representations, we introduce an operator which convolves the short-time Fourier transform of a signal with a specified kernel. Analytical properties including boundedness, compactness and positivity are investigated from the perspective of time-frequency analysis. A convolutional neural network and a vision transformer are trained to classify audio signals using spectrograms with different augmentation setups, including the above mentioned time-frequency blurring operator, with results indicating that the operator can significantly improve test performance, especially in the data-starved regime.
    \vspace{0mm}
    \end{abstract}

    \renewcommand{\thefootnote}{\fnsymbol{footnote}}
    \footnotetext{\emph{Keywords:} Time-frequency blurring, Data augmentation, Machine learning, Audio preprocessing.}
    \renewcommand{\thefootnote}{\arabic{footnote}}
    
    \section{Introduction and motivation}
    In time-frequency analysis, functions are analyzed in the \emph{phase space} of time and frequency. The phase space representation of a function can be modified to e.g. change the pitch or mask out an unwanted hiss and a signal can then be synthesized back. In this paper, we investigate the action of \emph{blurring} or \emph{spreading} a function in phase space. More precisely, we convolve the short-time Fourier transform (STFT)
    \begin{align*}
        V_\varphi \psi(x, \omega) = \langle \psi, \pi(x, \omega) \varphi \rangle = \int_{\R^d} \psi(t) \overline{\varphi(t-x)}e^{-2\pi i \omega \cdot t}\,dt
    \end{align*}
    of a signal $\psi$ with a kernel $\mu$ and then synthesize a new function from the result
    \begin{align}\label{eq:main_def}
        B_\mu^\varphi \psi(t) = V_\varphi^* (\mu * V_\varphi \psi)(t) = \int_{\R^{2d}} \mu * V_\varphi \psi(x,\omega) \varphi(t-x) e^{2\pi i \omega \cdot t} \,dx\,d\omega.
    \end{align}
    We refer to $B_\mu^\varphi$ as a \emph{time-frequency blurring operator} or \emph{STFT convolver} with window $\varphi$ and kernel~$\mu$.

    \begin{figure}[H]
        \centering
        \includegraphics[width=0.92\linewidth]{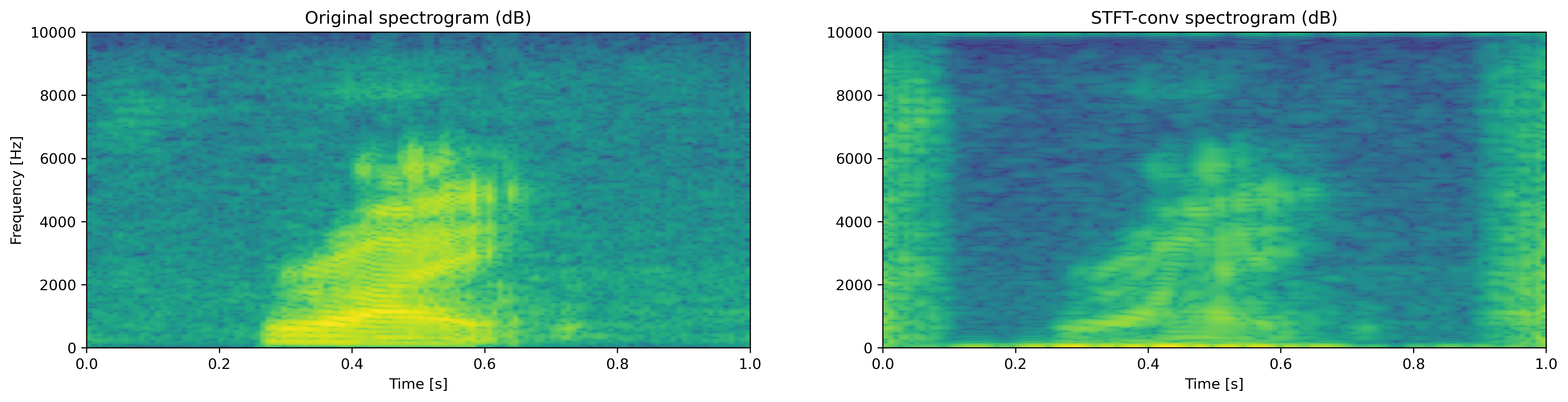}
        \caption{Action of a time-frequency blurring operator with Gaussian kernel on an audio recording, illustrated using spectrograms.}
        \label{fig:non-mel-blur-op}
    \end{figure}

    A similar operator was investigated as part of earlier work by Dörfler and Torrésani \cite{Dorfler2007, Dorfler2009} where a twisted convolution was instead used to represent general Hilbert-Schmidt operators in the time-frequency domain. As we will see, the change to a regular convolution greatly affects the properties of the operator.

    Our main motivation for studying the operator \eqref{eq:main_def} is its use as a data augmentation method for signals in machine learning. This area has seen much interest in recent years with applications in several areas of signal processing such as automatic speed recognition \cite{Park2019, Wang2019}, EEG analysis \cite{Rommel2022}, keyword spotting \cite{raju2018data} and sound classification \cite{AbayomiAlli2022}, and several augmentation libraries released \cite{papakipos2022augly, McFee2015, audiomentations, wavaugment2020, sigment, nlpaug}. With the rapid rise in popularity of vision transformer (ViT) based methods for signal analysis \cite{dosovitskiy2021, Gong2021, Dong2018}, which require more training data compared to convolutional methods \cite{dosovitskiy2021}, there is an increased need for robust augmentation methods. Standard techniques in this field include but are not limited to time-frequency masking, adding noise, preemphasis, simulating room impulse response and changing volume, pitch and speed.
    
    Augmentation methods which act directly on the spectrogram, the squared modulus of the STFT, have recently received a great deal of attention \cite{Park2019, Wang2021spec, Wei2020, saber} and have been used to achieve state of the art results in a variety of tasks \cite{Zhang2020, Chen2022, Vygon2021, Radford2022}. These methods can be easy to implement in the training pipeline as neural networks often use the spectrogram as an input feature. An obvious drawback is that the resulting augmented spectrograms do not necessarily correspond to spectrograms of actual signals which possess certain smoothness properties. Consequently, the training data may be out of distribution with respect to actual real world data. Should one wish to construct the closest corresponding waveform of the augmented spectrogram, a costly and inexact reconstruction method such as the Griffin–Lim algorithm \cite{Griffin1984} must be used.
    
    The proposed blurring operator instead acts on the STFT of the signal which allows for computationally efficient waveform synthesis since phase information is not lost as is the case for spectrogram augmentations. Upon computing the STFT, we obtain an in-distribution spectrogram. Many of the standard methods for signal data augmentation can be realized as STFT multipliers with different symbols but have traditionally been implemented in the waveform domain instead, c.f. \cite{audiomentations, wavaugment2020}. In \cite{Balazs2024}, the quantitative differences between these approaches was investigated. It is our belief that phase space based augmentation methods are a valuable tool and from this perspective, the introduction of a STFT convolver is a natural next step after STFT multipliers whose associated methods have seen widespread use and success. 

    The reason we expect our operator to be suitable for signal data augmentation is that it preserves high level phase space features while only changing the local structure. By this we mean that features such as a clear base note with regularly spaced harmonics or a chirp are preserved while the high resolution patterns in phase space are altered significantly. While these patterns certainly define an important part of the characteristic of the signal, the higher level structure stays intact.

    The applied reader may focus their attention on Sections \ref{sec:related_objects} and \ref{sec:numerical_implementation} for a deeper discussion on implementation considerations for which much of the preliminaries in Section \ref{sec:preliminaries} are not required.  

    \subsubsection*{Notational conventions}
    For functions on $\R^{2d}$, we will use $L^{p,q}(\R^{2d})$ to denote the mixed-norm Lebesgue spaces with $p$-norm in the first $d$ variables and $q$-norm in the remaining $d$ variables. The Fourier transform of a function $f$ will be denoted by $\hat{f}$ and use the standard normalization $\hat{f}(\omega) = \int_{\R^d} f(t) e^{-2 \pi i \omega \cdot t}\,dt$. By $\mathcal{F} L^p(\R^d)$ we will mean the Fourier-Lebesgue space consisting of functions whose Fourier transforms is in $L^p(\R^d)$. A check over a function, $\check{\psi}$, will mean that the argument is negated i.e., $\check{\psi}(t) = \psi(-t)$. We will take our ambient space to be $L^2(\R^d)$ and so norms $\Vert \cdot \Vert$ and inner products $\langle \cdot, \cdot \rangle$ without subscripts are to be understood to be taken in this space. The Schwartz space on $\R^d$ will be denoted by $\mathcal{S}(\R^d)$.

    \section{Time-frequency preliminaries}\label{sec:preliminaries}
    In this section we go over some of the key objects from time-frequency analysis which we will make use of below. For a more comprehensive overview, the reader is referred to \cite{grochenig_book, Wong2002, Wong1998, CORDERO2007, Suppappola2018}.

    \subsection{Short-time Fourier transform}  
    For a signal $\psi \in L^2(\R^d)$ and a window $\varphi \in L^2(\R^d)$, the short-time Fourier transform of $\psi$ with respect to $\varphi$ is defined as 
    \begin{align}\label{eq:def_stft}
        V_\varphi \psi(x, \omega) = \langle \psi, \pi(x, \omega)\varphi \rangle = \int_{\R^d} \psi(t) \overline{\varphi(t-x)}e^{-2\pi i \omega \cdot t}\,dt
    \end{align}
    where $\pi(x, \omega)$ is a time-frequency shift by time $x$ and frequency $\omega$, defined as $\pi(x, \omega) = M_\omega T_x$ where $M_\omega f(t) = e^{2\pi i \omega \cdot t}$ is the modulation operator and $T_xf(t) = f(t-x)$ is the translation operator. We often write $z = (x,\omega) \in \R^{2d}$ for the coordinates of $x, \omega$ in time-frequency space $\R^{2d}$. As is indicated in the middle step of the definition \eqref{eq:def_stft}, this transform projects the signal onto time-frequency shifted versions of the window and the interpretation is that $V_\varphi \psi(x, \omega)$ measures the importance of the time $t$ and frequency $\omega$ to $\psi$. 

    One of the key properties of the STFT is \emph{Moyal's identity} which states that
    \begin{align}\label{eq:moyal}
        \big\langle V_{\varphi_1}\psi_1, V_{\varphi_2}\psi_2 \big\rangle_{L^2(\R^{2d})} = \langle \psi_1, \psi_2 \rangle \overline{\langle \varphi_1, \varphi_2 \rangle}.
    \end{align}
    As a consequence, for a normalized window $\varphi$, $V_\varphi : L^2(\R^d) \to L^2(\R^{2d})$ is an isometry and the STFT mapping can be inverted in the sense that
    \begin{align}\label{eq:moyal_reconstruction}
        \psi = \frac{1}{\langle \varphi_1, \varphi_2 \rangle} V_{\varphi_2}^* (V_{\varphi_1} \psi) = \frac{1}{\langle \varphi_1, \varphi_2 \rangle} \int_{\R^{2d}} V_{\varphi_1} \psi(z) \pi(z) \varphi_2 \,dz
    \end{align}
    weakly.

    The STFT is in general complex valued and in many cases the phase information is superfluous. For this reason, the squared modulus $|V_\varphi \psi|^2$, which is called the spectrogram, is often used in applications. The process of recovering the STFT from the spectrogram, meaning to invert the mapping $V_\varphi \psi \mapsto |V_\varphi \psi|^2$, is called \emph{phase retrieval} and is an area of rich research.

    \subsection{Gabor spaces}
    The image of the short-time Fourier transform, $V_\varphi(L^2(\R^d)) \subset L^2(\R^{2d})$, is called the \emph{Gabor space} associated to the window $\varphi$. We saw earlier in \eqref{eq:moyal_reconstruction} how the adjoint of the STFT mapping $V_\varphi$ can be used to synthesize a signal from a function on $\R^{2d}$. If that original function is an STFT, the synthesized signal will be precisely the original signal used to compute the STFT. However, if we take a function in $L^2(\R^{2d})$, synthesize a signal from it with $V_\varphi^*$ and then compute the STFT, this is precisely orthogonally projecting the original function onto the Gabor space. We can write both of these facts as 
    \begin{align}\label{eq:gabor_proj}
        V_\varphi^* V_\varphi = I_{L^2}, \qquad V_\varphi V_\varphi^* = P_{V_\varphi(L^2)}
    \end{align}
    where $P_{V_\varphi(L^2)}$ is the orthogonal projection onto the Gabor space $V_\varphi(L^2(\R^d))$.

    \subsection{Localization operators}
    By introducing a multiplication operator in between $V_\varphi$ and $V_\varphi^*$ in $V_\varphi^* V_\varphi = I_{L^2}$, we can effectively choose which regions of the time-frequency plane we want to prioritize when reconstructing a signal. The resulting operator, called a localization operator or STFT multiplier, was first investigated by I. Daubechies in \cite{daubechies1988_loc} and has since seen wide applications in signal analysis, pseudo-differential operators and partial differential equations. We write it formally as
    \begin{align}\label{eq:loc_op}
        A_m^\varphi \psi = V_\varphi^*(m \cdot V_\varphi \psi) = \int_{\R^{2d}} m(z) V_\varphi \psi(z) \pi(z)\varphi\,dz 
    \end{align}
    and refer to the function $m : \R^{2d} \to \C$ as the \emph{mask} or \emph{symbol} of the operator. The mask is often taken to be an indicator function of some subset of the time-frequency plane. In this way, we can restrict a signal to this subset when reconstructing it.

    At times, we separate the analysis window in $V_\varphi$ and the synthesis window in $V_\varphi^*$ and use the relation $V_{\varphi_2}^* V_{\varphi_1} = \langle \varphi_1, \varphi_2 \rangle I_{L^2}$ from \eqref{eq:moyal_reconstruction}, leading to the more general operator
    \begin{align}\label{eq:loc_op_general}
        A_m^{\varphi_1, \varphi_2} \psi = V_{\varphi_2}^*(m \cdot V_{\varphi_1} \psi) = \int_{\R^{2d}} m(z) V_{\varphi_1} \psi(z) \pi(z)\varphi_2 \,dz. 
    \end{align}

    \subsection{Modulation spaces}
    Often when dealing with mapping properties of localization operators and other problems in time-frequency analysis, modulation spaces turn out to be a suitable setting. They are a class of function spaces first introduced by Feichtinger in \cite{FEICHTINGER1989307} which are characterized by the integrability properties of short-time Fourier transforms. For a non-zero Schwartz window $\varphi \in \mathcal{S}(\R^d)$, we can define the modulation space $M^{p,q}(\R^d)$ for $1 \leq p, q \leq \infty$ as the set of all tempered distributions $\psi$ such that
    \begin{align*}
        \Vert \psi \Vert_{M^{p,q}(\R^d)} = \left( \int_{\R^d} \left( \int_{\R^d} |V_\varphi \psi(x,\omega)|^p \,dx \right)^{q/p} d\omega \right)^{1/q} < \infty.
    \end{align*}
    When $p$ or $q$ are equal to $\infty$, we make suitable adjustments and for $p=q$ we use the shorthand $M^{p,p}(\R^d) = M^p(\R^d)$. The norms induced by different Schwartz windows are all equivalent and so we do not indicate the window when denoting a modulation space. From \eqref{eq:moyal}, it is clear that $M^2(\R^d) = L^2(\R^d)$. The case of $p = 1$ is of special importance and the space $M^1(\R^d)$ is often called \emph{Feichtinger's algebra}.

    We can interpolate between different modulation spaces in the same way as we do for Lebesgue spaces as described in the following simplified lemma which follows from \cite[Theorem 1.1]{Guo2015}.
    \begin{lemma}\label{lemma:interpolate_modulation}
        Let $1 \leq p_1, p_2 \leq \infty$ and $p_\theta$ be defined by the relation
        $$
        \frac{1}{p_\theta} = \frac{1-\theta}{p_1} + \frac{\theta}{p_2}.
        $$
        Then we can interpolate between $M^{p_1}(\R^d)$ and $M^{p_2}(\R^d)$ as
        \begin{align*}
            \Big[ M^{p_1}(\R^d),\, M^{p_2}(\R^d) \Big]_\theta = M^{p_\theta}(\R^d).
        \end{align*}
    \end{lemma}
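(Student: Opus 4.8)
The plan is to realize each modulation space $M^{p}(\R^d)$ as a \emph{retract} of the mixed-norm Lebesgue space — here just $L^{p}(\R^{2d})$ since $p=q$ — and then transport the classical complex interpolation identity for Lebesgue spaces through the retract. Concretely, fix a normalized window $\varphi \in \mathcal{S}(\R^d)$. By the very definition of the modulation space norm in the diagonal case, the analysis map $V_\varphi$ is an isometry from $M^{p}(\R^d)$ into $L^{p}(\R^{2d})$ for every $p \in [1,\infty]$; and the synthesis map $V_\varphi^*$ is bounded from $L^{p}(\R^{2d})$ onto $M^{p}(\R^d)$ for every $p$, which is the standard boundedness of the STFT synthesis operator on modulation spaces (it reduces, via a second window, to a convolution-type estimate with the rapidly decaying kernel $z \mapsto \langle \pi(z)\varphi, \varphi\rangle$ and Young's inequality). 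Together with \eqref{eq:gabor_proj} this gives $V_\varphi^* V_\varphi = I$, so the pair $(V_\varphi, V_\varphi^*)$ exhibits $M^{p}(\R^d)$ as a retract of $L^{p}(\R^{2d})$ with constants independent of $p$.

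First I would invoke the general principle that complex interpolation commutes with retracts: if the same bounded maps $\iota \colon A_j \to X_j$ and $\rho \colon X_j \to A_j$ with $\rho \circ \iota = \mathrm{id}$ work for $j=0,1$, then $[A_0,A_1]_\theta = \rho\big([X_0,X_1]_\theta\big)$ with equivalent norms (see Bergh–Löfström or Triebel). Applying this with $A_j = M^{p_j}(\R^d)$, $X_j = L^{p_j}(\R^{2d})$, $\iota = V_\varphi$ and $\rho = V_\varphi^*$ reduces the claim to the identity $[L^{p_1}(\R^{2d}), L^{p_2}(\R^{2d})]_\theta = L^{p_\theta}(\R^{2d})$, which is the classical interpolation theorem for Lebesgue spaces, valid as stated for the complex method including the endpoint exponents.

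A few technical points need care. One must check that the couples $(M^{p_1},M^{p_2})$ and $(L^{p_1},L^{p_2})$ are compatible in the sense of interpolation theory; for the modulation spaces this is arranged by viewing all of them inside $\mathcal{S}'(\R^d)$, and the Schwartz hypothesis on $\varphi$ guarantees $V_\varphi$ and $V_\varphi^*$ are well defined and intertwine these ambient spaces. The main obstacle, such as it is, is establishing the boundedness of $V_\varphi^*$ from $L^{p}(\R^{2d})$ to $M^{p}(\R^d)$ uniformly in $p$ and confirming that the \emph{same} operator pair serves as the retract at both endpoints simultaneously; once this is in place the argument is essentially formal. Alternatively, and more briefly, the statement is exactly the unweighted, diagonal $p=q$ special case of \cite[Theorem 1.1]{Guo2015}, from which it follows immediately.
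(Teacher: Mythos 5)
The paper does not actually prove this lemma --- it is stated as following directly from \cite[Theorem 1.1]{Guo2015}, which is exactly the closing alternative in your proposal. The main body of your argument, the retract method, is therefore a genuinely different and more self-contained route, and it is the standard proof of modulation-space interpolation going back to Feichtinger and Triebel: $V_\varphi$ is an isometry $M^{p}(\R^d) \to L^{p}(\R^{2d})$ by definition of the norm, $V_\varphi^*$ is bounded $L^{p}(\R^{2d}) \to M^{p}(\R^d)$ for $\varphi \in M^1(\R^d) \supset \mathcal{S}(\R^d)$ (this is precisely the paper's Lemma \ref{lemma:stft_adjoint_mapping}), and the retract principle then reduces the claim to $[L^{p_1}(\R^{2d}), L^{p_2}(\R^{2d})]_\theta = L^{p_\theta}(\R^{2d})$. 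Two points would need explicit care in a written-out version: the identity $V_\varphi^* V_\varphi = I$ must hold on $M^{p_j}(\R^d)$ and not merely on $L^2(\R^d)$, so for $p_j = \infty$ one needs the weak-$*$ extension of the inversion formula (e.g.\ \cite[Corollary 11.3.4]{grochenig_book}) rather than \eqref{eq:gabor_proj} alone; and Calder\'on's theorem at an $L^\infty$ endpoint carries the usual caveat that the intersection is not dense in $L^\infty$, which is harmless here since $\theta \in (0,1)$ forces $p_\theta < \infty$ unless $p_1 = p_2 = \infty$, in which case the statement is trivial. The trade-off is clear: the citation to \cite{Guo2015} is shorter and delegates exactly these endpoint technicalities to a reference that treats them sharply, while your retract argument makes the lemma self-contained and makes transparent that the equivalence constants depend only on the window.
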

    Later we will also have use for the following mapping property which is a special case of \cite[Proposition 11.3.7]{grochenig_book}.
    \begin{lemma}\label{lemma:stft_adjoint_mapping}
        If $\varphi \in M^1(\R^d)$, then $V_\varphi^*$ maps $L^{p,q}(\R^{2d})$ to $M^{p,q}(\R^d)$ for $1 \leq p,q \leq \infty$ with
        \begin{align*}
            \Vert V_\varphi^* F \Vert_{M^{p,q}(\R^d)} \lesssim \Vert \varphi \Vert_{M^1(\R^d)} \Vert F \Vert_{L^{p,q}(\R^{2d})}
        \end{align*}
        and $\psi \mapsto \Vert V_\varphi \psi \Vert_{L^{p,q}(\R^{2d})}$ is an equivalent norm on $M^{p,q}(\R^d)$.
    \end{lemma}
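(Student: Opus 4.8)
Because the statement is quoted as a special case of \cite[Proposition 11.3.7]{grochenig_book}, the most direct route is simply to specialize that proposition; I nonetheless sketch a self-contained argument, the whole of which rests on one pointwise convolution estimate. Fix a normalized window $g \in \mathcal{S}(\R^d)$, so that by definition $\Vert\cdot\Vert_{M^{p,q}} = \Vert V_g(\cdot)\Vert_{L^{p,q}}$. Expanding $V_\varphi^* F = \int_{\R^{2d}} F(w)\,\pi(w)\varphi\,dw$ weakly and pairing with $\pi(z)g$ gives $V_g(V_\varphi^* F)(z) = \int_{\R^{2d}} F(w)\,\langle \pi(w)\varphi, \pi(z)g\rangle\,dw$. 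The composition rule $\pi(z)^*\pi(w) = c(z,w)\,\pi(w-z)$ with $|c(z,w)| = 1$ gives $|\langle\pi(w)\varphi,\pi(z)g\rangle| = |V_\varphi g(w-z)|$, hence the pointwise bound
\begin{align*}
\big|V_g(V_\varphi^* F)(z)\big| \;\le\; \big(|F| * |\widetilde{V_\varphi g}|\big)(z), \qquad \widetilde{h}(z) := h(-z).
\end{align*}

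Two standard ingredients then finish the mapping property. First, the mixed-norm Young inequality $\Vert H * h\Vert_{L^{p,q}(\R^{2d})} \le \Vert H\Vert_{L^{p,q}(\R^{2d})}\,\Vert h\Vert_{L^1(\R^{2d})}$. Second, the estimate $\Vert V_\varphi g\Vert_{L^1(\R^{2d})} \le \Vert V_\varphi g\Vert_{M^1(\R^{2d})} \lesssim \Vert\varphi\Vert_{M^1}\,\Vert g\Vert_{M^1}$, which says that the cross-STFT of two elements of Feichtinger's algebra again lies in $M^1 \subseteq L^1$; this is the only point at which the hypothesis $\varphi \in M^1$ enters, and it may be cited or proved directly from the definition of $M^1$. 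Combining these with the pointwise bound (and noting that the weak integral defining $V_\varphi^* F$ is legitimate once $F \in L^{p,q}$, since $\widetilde{V_\varphi g} \in L^1$) yields $\Vert V_\varphi^* F\Vert_{M^{p,q}} \lesssim \Vert g\Vert_{M^1}\,\Vert\varphi\Vert_{M^1}\,\Vert F\Vert_{L^{p,q}}$, and $\Vert g\Vert_{M^1}$ is an absolute constant once $g$ has been fixed.

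For the equivalent-norm claim (where now $\varphi \neq 0$), one inequality comes from applying the mapping property to $F = V_\varphi\psi$ together with the reconstruction identity $V_\varphi^* V_\varphi = \Vert\varphi\Vert^2 I$ implied by \eqref{eq:moyal_reconstruction}: if $\Vert V_\varphi\psi\Vert_{L^{p,q}} < \infty$, then $\psi = \Vert\varphi\Vert^{-2} V_\varphi^*(V_\varphi\psi) \in M^{p,q}$ with $\Vert\psi\Vert_{M^{p,q}} \lesssim \Vert\varphi\Vert_{M^1}\,\Vert\varphi\Vert^{-2}\,\Vert V_\varphi\psi\Vert_{L^{p,q}}$. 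For the converse, expand $V_\varphi\psi(z) = \int_{\R^{2d}} V_g\psi(w)\,\langle\pi(w)g,\pi(z)\varphi\rangle\,dw$ using $\psi = V_g^* V_g\psi$, which gives $|V_\varphi\psi| \le |V_g\psi| * |\widetilde{V_g\varphi}|$, and apply Young's inequality and the $L^1$-estimate once more to get $\Vert V_\varphi\psi\Vert_{L^{p,q}} \lesssim \Vert\varphi\Vert_{M^1}\,\Vert\psi\Vert_{M^{p,q}}$.

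The only genuinely substantive step is the $L^1$-bound $\Vert V_\varphi g\Vert_{L^1} \lesssim \Vert\varphi\Vert_{M^1}\Vert g\Vert_{M^1}$; everything else is bookkeeping with the covariance identity and Young's inequality, and the remaining technicalities---the weak definition of $V_\varphi^* F$ for unbounded $F$ and the window-independence of the modulation-space norm over $M^1$ windows---follow from the same estimates by routine density arguments.
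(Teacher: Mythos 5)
Your argument is correct and is essentially the standard proof of the cited result: the paper itself gives no proof of this lemma, deferring entirely to \cite[Proposition 11.3.7]{grochenig_book}, and your pointwise bound $|V_g(V_\varphi^*F)| \le |F| * |\widetilde{V_\varphi g}|$ combined with mixed-norm Young and the $L^1$-estimate for cross-STFTs of $M^1$ windows is exactly the mechanism behind that proposition. Nothing to contrast; the sketch, including the norm-equivalence via the change-of-window estimate, is sound.
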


    \subsection{Quadratic time-frequency distributions}
    The spectrogram $|V_\varphi \psi|^2$ is just one example of a time-frequency distribution which has a quadratic dependence on $\psi$. Another widely used object is the Wigner distribution
    \begin{align*}
        W(\psi)(x,\omega) = \int_{\R^d} \psi\left(x+\frac{t}{2}\right)\overline{\psi\left(x-\frac{t}{2}\right)} e^{-2\pi i \omega \cdot t}\,dt
    \end{align*}
    defined for $\psi \in L^2(\R^d)$. More generally, one can characterize a class of well-behaved quadratic time-frequency representations as those of the form
    $$
    Q_\Phi(\psi) = W(\psi) * \Phi
    $$
    for some tempered distribution $\Phi$. These time-frequency distributions are said to belong to \emph{Cohen's class of quadratic time-frequency distributions}.

    \section{Related objects and adaptations}\label{sec:related_objects}    
    For applications, the formulation \eqref{eq:main_def} may not always be the optimal incarnation of the idea of blurring a signal in time and frequency. In this section, we motivate this statement by detailing alternatives which may be better suited to certain applications.
    
    \subsection{Spectrogram blurring}\label{sec:spectrogram_blurring}
    As discussed in the introduction, many contemporary signal augmentation methods act directly on the spectrogram. In particular, the widely popular SpecAugment augmentation method \cite{Park2019} multiplies the spectrogram by a time-frequency mask. Here there is a clear analog in localization operators and we should expect that differences between SpecAugment and applying a localization operator with the same mask should only be visible near the edges of the mask. For time-frequency blurring operators however, performing the blurring action on the STFT instead of the spectrogram leads to a noticeable difference as the phase data of the STFT greatly affects the convolution. For example, the STFT of white noise is largely unstructured while sinusoids and impulses give rise to predictable phase gradients, see e.g \cite[Figure 1]{Prusa2017} and \cite[Figure 2]{Giorgi2022}, and hence a blurring operation which takes phase into account should effect white noise differently than such a signal.
    
    Performing the convolution on the spectrogram is easier to implement and the intuition behind ``spreading'' the time-frequency contents of the signal stays intact. To the best of our knowledge, this approach has previously only been mentioned in the GitHub repository \cite{saber} but never studied in any detail.

    Analyzing spectrogram blurring as a mapping on $L^2(\R^d)$ is in general intractable as it involves a phase retrieval step to obtain a waveform. By instead looking at spectrogram blurring as a mapping from $L^2(\R^d)$ to $L^1(\R^{2d})$, we see that blurring a spectrogram exactly corresponds to switching quadratic time-frequency distribution from the spectrogram to a Cohen's class distribution \cite{grochenig_book, Hlawatsch1992, Suppappola2018}. Indeed,
    \begin{align*}
        \mu * |V_\varphi \psi|^2 = \mu * \big(W(\psi) * W(\check{\varphi})\big) = W(\psi) * \big( \mu * W(\check{\varphi}) \big)
    \end{align*}
    and so a blurred spectrogram is a well behaved quadratic time-frequency distribution.

    In Section \ref{sec:examples}, we present a brief numerical comparison between spectrogram blurring and the time-frequency blurring operator where we refer to the spectrogram blurring procedure as SpecBlur.
     
    \subsection{Position-dependent kernel}\label{sec:pos_dep_ker}
    An obvious generalization of $B_\mu^\varphi$ is to allow the kernel $\mu$ to depend on $z \in \R^{2d}$. We can write this as
    \begin{align*}
        B_{\boldsymbol{\mu}}^\varphi \psi = \int_{\R^{2d}} \mu_z * V_\varphi \psi(z) \pi(z)\varphi \,dz
    \end{align*}
    where we use a bold $\boldsymbol{\mu}$ to indicate that it is a function on the double phase space $\R^{4d}$ with $\boldsymbol{\mu}(z,w) = \mu_z(w)$. This is a very general form of operator and e.g. localization operators can be realized as a special case by
    \begin{align}\label{eq:blur_gen_loc}
        \mu_z = m(z)\delta_0 \implies B_{\boldsymbol{\mu}}^\varphi = A_m^\varphi    
    \end{align}
    since $B_{\delta_0}^\varphi = V_\varphi^* V_\varphi = I_{L^2}$. In view of this, these generalized time-frequency blurring operators may provide a nice model for approximating operators or interpolating between them, see e.g. \cite{Olivero2012} for work in a similar direction for localization operators. Indeed, such an object has already been used in the discrete setting in \cite{Mack2020, Schroter2022} for noise reduction under the name ``Deep Filtering''. In \cite{Mack2020, Schroter2022} and related works, the STFT and engineered additional features are used as the input to a neural network which outputs the position dependent kernels $\mu_z$ with the goal of minimizing some distance function
    \begin{align*}
        d\big(B_{\boldsymbol{\mu}}^\varphi \psi_{\text{noisy}}, \psi_{\text{clean}} \big).
    \end{align*}
    Earlier work on noise reduction, e.g., \cite{Williamson_2016}, has focused on learning an optimal time-frequency mask which, in view of \eqref{eq:blur_gen_loc}, is just a special case of a time-frequency blurring operator. On a high level, the deep filtering papers have exchanged a function on $\R^{2d}$ ($m$) for one on $\R^{4d}$ $(\boldsymbol{\mu})$ or, equivalently, imposed structure on the final layer of their neural network.
    
    We will not investigate the properties of these more heavily parameterized operators further but note that the present paper is likely to be a good first step in this direction.

    \subsection{Window generalizations}\label{sec:generalized_windows}
    We briefly discuss two generalizations of time-frequency blurring operators which both have clear counterparts for localization operators. The first is separating the analysis and synthesis windows as was done for localization operators in \eqref{eq:loc_op_general}. This leads to the following operator 
    \begin{align*}
        B_\mu^{\varphi_1, \varphi_2} \psi = V_{\varphi_2}^* (\mu * V_{\varphi_1} \psi) = \int_{\R^{2d}} \mu * V_{\varphi_1} \psi(z)\,\pi(z)\varphi_2\,dz.
    \end{align*}
    Many of the results which will be derived in Section \ref{sec:analytic_properties} can be adapted to this generalized operator. The required changes to the proofs are minimal and routine and, in the interest of brevity, we leave them to the interested reader.

    The concept of multi-window localization operators, defined as linear combinations of localization operators, $\sum_{n=1}^N A_m^{\varphi_1^n, \varphi_2^n}$, was further generalized to permit \emph{operator windows} $S$ of trace-class in \cite{Luef2019} as a part of the framework of quantum harmonic analysis \cite{Werner1984}. Without going into too much detail, we note that these are defined via the Bochner integral
    \begin{align*}
        A_m^S = \int_{\R^{2d}} m(z) \pi(z) S \pi(z)^* \,dz
    \end{align*}
    which reduces down to multi-window localization operator when the operator $S$ is of finite rank. This formulation of localization operators has proven useful \cite{Luef2019, Halvdansson2023, Luef2021, Luef2019_acc}. In our case, the generalization to operator windows $S$ of trace-class can be written as
    \begin{align*}
        B_\mu^S \psi = \int_{\R^{2d}} \mu * \big(\pi(z) S \pi(\cdot)^* \psi \big)(z)\,dz.
    \end{align*}
    Note specifically that if $S$ has the singular value decomposition $S = \sum_n s_n (\varphi_n^1 \otimes \varphi_n^2)$, it can be written as
    \begin{align*}
        B_\mu^S = \sum_n s_n B_\mu^{\varphi_n^1, \varphi_n^2},
    \end{align*}
    i.e., a linear combination of time-frequency blurring operators.

    \section{Analytical properties}\label{sec:analytic_properties}

    \subsection{Mapping properties}\label{sec:mapping_props}
    Up until now we have left the spaces which $\varphi$ and $\mu$ belong to ambiguous. We will see that the most natural assumptions are that the kernel $\mu$ is a bounded measure and that the window $\varphi$ is square integrable or in Feichtinger's algebra, similar to the convention for localization operators. Changing these assumptions affects which spaces the operator maps to and from. While making no claims of completeness, we collect some of the more easily available results on these mapping properties in this subsection.

    Our first result provides the most natural boundedness condition on $L^2(\R^d)$.
    \begin{proposition}\label{prop:bounded_op}
        Let $\mu \in M(\R^{2d})$ and $\varphi \in L^2(\R^d)$, then $B_\mu^\varphi$ is a bounded operator on $L^2(\R^d)$ with
        \begin{align*}
            \Vert B_\mu^\varphi \Vert_{B(L^2(\R^d))} \leq \Vert \mu \Vert_{M(\R^{2d})} \Vert \varphi \Vert^2.
        \end{align*}
    \end{proposition}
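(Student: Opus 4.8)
The plan is to factor $B_\mu^\varphi$ through $L^2(\R^{2d})$ as a composition of three bounded maps — the analysis operator $V_\varphi$, convolution by $\mu$, and the synthesis operator $V_\varphi^*$ — and then simply multiply the three operator norms.

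First I would record the endpoint bounds on the STFT maps. By Moyal's identity \eqref{eq:moyal}, the analysis operator $V_\varphi : L^2(\R^d) \to L^2(\R^{2d})$ satisfies $\Vert V_\varphi \psi \Vert_{L^2(\R^{2d})} = \Vert \varphi \Vert \, \Vert \psi \Vert$, so its operator norm equals $\Vert \varphi \Vert$; since the synthesis operator $V_\varphi^* : L^2(\R^{2d}) \to L^2(\R^d)$ from \eqref{eq:loc_op} is its adjoint, it has the same operator norm $\Vert \varphi \Vert$. Next I would treat the middle step. Because $\varphi, \psi \in L^2(\R^d)$, the function $V_\varphi \psi$ is continuous and bounded pointwise by $\Vert \psi \Vert \, \Vert \varphi \Vert$, so $\mu * V_\varphi \psi(z) = \int_{\R^{2d}} V_\varphi \psi(z-w)\, d\mu(w)$ is well defined for every $z$. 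The key estimate is a Young-type inequality for convolution of a finite measure with an $L^2$ function, which follows from Minkowski's integral inequality together with the fact that translation is an isometry on $L^2(\R^{2d})$:
\begin{align*}
    \Vert \mu * F \Vert_{L^2(\R^{2d})} = \left\Vert \int_{\R^{2d}} T_w F \, d\mu(w) \right\Vert_{L^2(\R^{2d})} \leq \int_{\R^{2d}} \Vert T_w F \Vert_{L^2(\R^{2d})}\, d|\mu|(w) = \Vert \mu \Vert_{M(\R^{2d})} \Vert F \Vert_{L^2(\R^{2d})}.
\end{align*}
Thus convolution by $\mu$ is bounded on $L^2(\R^{2d})$ with norm at most $\Vert \mu \Vert_{M(\R^{2d})}$, and in particular $\mu * V_\varphi \psi$ again lies in $L^2(\R^{2d})$, so that $V_\varphi^*$ may legitimately be applied to it.

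Composing the three bounds then yields
\begin{align*}
    \Vert B_\mu^\varphi \psi \Vert = \Vert V_\varphi^*(\mu * V_\varphi \psi) \Vert \leq \Vert \varphi \Vert \, \Vert \mu \Vert_{M(\R^{2d})} \, \Vert V_\varphi \psi \Vert_{L^2(\R^{2d})} = \Vert \mu \Vert_{M(\R^{2d})} \Vert \varphi \Vert^2 \Vert \psi \Vert,
\end{align*}
which is exactly the claimed estimate. There is no genuine obstacle here; the only points requiring mild care are verifying that $\mu * V_\varphi \psi$ stays in $L^2(\R^{2d})$ and justifying the convolution bound for a measure rather than an $L^1$ kernel, both handled by the Minkowski inequality computation above. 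One could additionally remark that the same chain of inequalities shows $\mu \mapsto B_\mu^\varphi$ and $\varphi \mapsto B_\mu^\varphi$ are continuous, and that for absolutely continuous $\mu$ one recovers the familiar $L^1$-kernel estimate.
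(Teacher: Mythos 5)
Your proposal is correct and follows essentially the same route as the paper: factor $B_\mu^\varphi$ through $L^2(\R^{2d})$ and chain Moyal's identity for the analysis step, Young's inequality for the measure convolution, and the bound $\Vert V_\varphi^*\Vert = \Vert\varphi\Vert$ for the synthesis step. The only cosmetic difference is that you obtain the synthesis bound from adjointness while the paper routes it through the projection identity $V_\varphi V_\varphi^* = P_{V_\varphi(L^2)}$, and you spell out the Minkowski-inequality justification of Young's inequality for a finite measure, which the paper leaves implicit.
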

    \begin{proof}
        For $\psi \in L^2(\R^d)$ we compute
        \begin{align*}
            \Vert B_\mu^\varphi \psi \Vert &=  \frac{1}{\Vert \varphi \Vert} \big\Vert V_\varphi( V_\varphi^*( \mu * V_\varphi) ) \big\Vert_{L^2(\R^{2d})}\\
            &\leq \Vert \varphi \Vert\Vert \mu * V_\varphi \psi \Vert_{L^2(\R^{2d})}\\
            &\leq \Vert \mu \Vert_{M(\R^{2d})} \Vert \varphi \Vert \Vert V_\varphi\psi \Vert_{L^2(\R^{2d})}\\
            &=\Vert \mu \Vert_{M(\R^{2d})} \Vert \varphi \Vert^2 \Vert \psi \Vert
        \end{align*}
        where we used that $V_\varphi V_\varphi^* = P_{V_\varphi(L^2)}$ is an orthogonal projection, Young's inequality and Moyal's identity \eqref{eq:moyal} twice.
    \end{proof}
    Next we look at mapping between modulation spaces for general exponents.
    \begin{proposition}\label{prop:mapping_master}
        Let $\mu \in L^{p_1, p_2}(\R^{2d})$, $\psi \in M^{q_1, q_2}(\R^d)$ and $\varphi \in M^1(\R^d)$ with $\frac{1}{p_i} + \frac{1}{q_i} = 1 + \frac{1}{r_i}$ and $1 \leq p_i, q_i, r_i \leq \infty$ for $i = 1,2$. Then $B_\mu^\varphi \psi \in M^{r_1, r_2}(\R^d)$ with
        $$
        \Vert B_\mu^\varphi \psi\Vert_{M^{r_1, r_2}(\R^d)} \lesssim \Vert \mu \Vert_{L^{p_1, p_2}(\R^{2d})}\Vert \psi \Vert_{M^{q_1, q_2}(\R^{d})}
        $$
        where the implicit constant depends on $\varphi$ and the $p, q, r$ constants.
    \end{proposition}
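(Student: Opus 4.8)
The plan is to use the factorization $B_\mu^\varphi \psi = V_\varphi^*(\mu * V_\varphi \psi)$ and treat the three operations — taking the STFT, convolving with $\mu$, and applying the adjoint STFT — separately. Since $\varphi \in M^1(\R^d)$, Lemma~\ref{lemma:stft_adjoint_mapping} gives that $V_\varphi^*$ maps $L^{r_1,r_2}(\R^{2d})$ boundedly into $M^{r_1,r_2}(\R^d)$, and at the other end that $\psi \mapsto \Vert V_\varphi \psi \Vert_{L^{q_1,q_2}(\R^{2d})}$ is an equivalent norm on $M^{q_1,q_2}(\R^d)$, so in particular $\Vert V_\varphi \psi \Vert_{L^{q_1,q_2}(\R^{2d})} \lesssim \Vert \psi \Vert_{M^{q_1,q_2}(\R^d)}$. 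Hence the whole statement reduces to the convolution estimate $\Vert \mu * F \Vert_{L^{r_1,r_2}(\R^{2d})} \leq \Vert \mu \Vert_{L^{p_1,p_2}(\R^{2d})} \Vert F \Vert_{L^{q_1,q_2}(\R^{2d})}$, applied with $F = V_\varphi \psi$.

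The key ingredient I would invoke is Young's inequality for mixed-norm Lebesgue spaces on $\R^{2d} = \R^d_x \times \R^d_\omega$: under the hypotheses $\frac{1}{p_i} + \frac{1}{q_i} = 1 + \frac{1}{r_i}$ with $1 \leq p_i,q_i,r_i \leq \infty$ for $i = 1,2$, convolution over $\R^{2d}$ maps $L^{p_1,p_2}(\R^{2d}) \times L^{q_1,q_2}(\R^{2d})$ into $L^{r_1,r_2}(\R^{2d})$ with norm at most $1$. I would prove this by iterating the classical Young inequality in the two blocks of variables: writing $(\mu * F)(x,\omega) = \int_{\R^d} \big( \mu(\cdot, \omega - \eta) * F(\cdot, \eta) \big)(x)\, d\eta$, one first applies the $\R^d_x$-Young inequality inside the $\eta$-integral, then Minkowski's integral inequality to move the $L^{r_1}_x$-norm inside, and finally the $\R^d_\omega$-Young inequality to the remaining convolution in $\omega$; each step uses exactly one of the exponent relations, and the endpoint cases $p_i, q_i, r_i \in \{1, \infty\}$ are handled by the usual conventions. (This is a standard fact about mixed-norm spaces and could alternatively be cited.) This is the only step that requires any genuine work; everything else is bookkeeping with the lemmas already in the paper.

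Putting the pieces together yields
\begin{align*}
\Vert B_\mu^\varphi \psi \Vert_{M^{r_1,r_2}(\R^d)} &\lesssim \Vert \varphi \Vert_{M^1(\R^d)} \Vert \mu * V_\varphi \psi \Vert_{L^{r_1,r_2}(\R^{2d})}\\
&\leq \Vert \varphi \Vert_{M^1(\R^d)} \Vert \mu \Vert_{L^{p_1,p_2}(\R^{2d})} \Vert V_\varphi \psi \Vert_{L^{q_1,q_2}(\R^{2d})}\\
&\lesssim \Vert \mu \Vert_{L^{p_1,p_2}(\R^{2d})} \Vert \psi \Vert_{M^{q_1,q_2}(\R^d)},
\end{align*}
with the implicit constants coming from Lemma~\ref{lemma:stft_adjoint_mapping} and the equivalence of modulation-space norms, hence depending on $\varphi$, on $d$, and on the exponents as claimed. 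I would also note in passing that $B_\mu^\varphi \psi$ is well defined here, since the chain of estimates already shows $\mu * V_\varphi \psi \in L^{r_1,r_2}(\R^{2d}) \subset \mathcal{S}'(\R^{2d})$, a space on which $V_\varphi^*$ acts for $\varphi \in M^1(\R^d)$, so the formal manipulation of \eqref{eq:main_def} is justified.
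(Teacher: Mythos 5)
Your proposal is correct and follows essentially the same route as the paper: factor $B_\mu^\varphi \psi = V_\varphi^*(\mu * V_\varphi \psi)$, apply Lemma~\ref{lemma:stft_adjoint_mapping} on both ends, and use the mixed-norm Young inequality in the middle (which the paper simply cites from Gr\"ochenig rather than sketching a proof of, as you do). No substantive differences.
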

    \begin{proof}
        For $\psi \in M^{q_1, q_2}(\R^d)$, we have that
        \begin{align*}
            \Vert B_\mu^\varphi \psi \Vert_{M^{r_1, r_2}(\R^d)} &= \Vert V_\varphi^*( \mu * V_\varphi \psi ) \Vert_{M^{r_1, r_2}(\R^d)}\\
            &\lesssim \Vert \mu * V_\varphi \psi \Vert_{L^{r_1, r_2}(\R^{2d})} && \big(\text{Lemma \ref{lemma:stft_adjoint_mapping}}\big)\\
            &\leq \Vert \mu \Vert_{L^{p_1, p_2}(\R^{2d})} \Vert V_\varphi\psi \Vert_{L^{q_1, q_2}(\R^{2d})} && \big( \text{Young's inequality} \big)\\
            &\lesssim \Vert \mu \Vert_{L^{p_1, p_2}(\R^{2d})} \Vert \psi \Vert_{M^{q_1, q_2}(\R^{d})}
        \end{align*}
        where we used the equivalence of norms for modulation spaces in the last step.
    \end{proof}
    A proof of the general version of Young's inequality which we used above can be found in e.g. \cite[Proposition 11.1.3]{grochenig_book}.

    If we want the output $B_\mu^\varphi \psi$ to be in an $L^p$ space instead of a modulation space, we can get this out of the above result with some minor work.
    \begin{proposition}\label{prop:interpol_Mp_Lp_mapping}
        Fix $1 \leq p \leq 2$ and let $\varphi \in M^1(\R^d)$, $\mu \in L^1(\R^{2d})$ and $\psi \in M^p(\R^d)$, then
        \begin{align*}
            \Vert B_\mu^\varphi \psi \Vert_{L^p(\R^d)} \lesssim \Vert \mu \Vert_{L^1(\R^{2d})} \Vert \psi \Vert_{M^p(\R^d)}
        \end{align*}
        where the implicit constant is dependent on $\varphi$ and $p$.
    \end{proposition}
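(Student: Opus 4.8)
The plan is to obtain the $L^p$ bound by interpolating between the two endpoints $p=1$ and $p=2$, using the mapping result of Proposition \ref{prop:mapping_master} at both ends together with the embeddings relating modulation spaces and Lebesgue spaces. The key point is that $M^1(\R^d) \hookrightarrow L^1(\R^d)$ and $L^2(\R^d) = M^2(\R^d)$, so that the target space $L^p(\R^d)$ for $1 \leq p \leq 2$ can be reached from a modulation-space estimate.

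First I would treat the endpoint $p=2$. Here $M^2(\R^d) = L^2(\R^d)$ isometrically, so Proposition \ref{prop:mapping_master} applied with $p_1=p_2=1$, $q_1=q_2=2$, $r_1=r_2=2$ (which satisfy $\frac{1}{1}+\frac{1}{2} = 1 + \frac{1}{2}$) gives directly
\begin{align*}
    \Vert B_\mu^\varphi \psi \Vert_{L^2(\R^d)} = \Vert B_\mu^\varphi \psi \Vert_{M^2(\R^d)} \lesssim \Vert \mu \Vert_{L^1(\R^{2d})} \Vert \psi \Vert_{M^2(\R^d)}.
\end{align*}
Next I would treat the endpoint $p=1$. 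Proposition \ref{prop:mapping_master} with $p_1=p_2=1$, $q_1=q_2=1$, $r_1=r_2=1$ gives $\Vert B_\mu^\varphi \psi \Vert_{M^1(\R^d)} \lesssim \Vert \mu \Vert_{L^1(\R^{2d})} \Vert \psi \Vert_{M^1(\R^d)}$, and then the continuous embedding $M^1(\R^d) \hookrightarrow L^1(\R^d)$ (a standard fact, following from writing $\psi = V_\varphi^* V_\varphi \psi$ and applying Lemma \ref{lemma:stft_adjoint_mapping}, or from \cite{grochenig_book}) upgrades the left-hand side to the $L^1$ norm:
\begin{align*}
    \Vert B_\mu^\varphi \psi \Vert_{L^1(\R^d)} \lesssim \Vert B_\mu^\varphi \psi \Vert_{M^1(\R^d)} \lesssim \Vert \mu \Vert_{L^1(\R^{2d})} \Vert \psi \Vert_{M^1(\R^d)}.
\end{align*}

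Finally I would interpolate. For fixed $\mu \in L^1(\R^{2d})$, the map $\psi \mapsto B_\mu^\varphi \psi$ is linear and, by the two displays above, bounded from $M^1(\R^d)$ to $L^1(\R^d)$ and from $M^2(\R^d)$ to $L^2(\R^d)$, with operator norms controlled by $\Vert \mu \Vert_{L^1(\R^{2d})}$ in both cases. Complex interpolation of the domain scale via Lemma \ref{lemma:interpolate_modulation} gives $[M^1, M^2]_\theta = M^{p_\theta}$ with $\frac{1}{p_\theta} = (1-\theta) + \frac{\theta}{2}$, while the Riesz–Thorin theorem gives $[L^1, L^2]_\theta = L^{p_\theta}$ for the same $p_\theta$; hence $B_\mu^\varphi$ is bounded $M^p(\R^d) \to L^p(\R^d)$ for every $p \in [1,2]$ with the stated dependence on $\Vert \mu \Vert_{L^1(\R^{2d})}$.

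I do not anticipate a serious obstacle; the only point requiring a little care is the bookkeeping in the interpolation argument, namely checking that the interpolation parameter $p_\theta$ coming from Lemma \ref{lemma:interpolate_modulation} matches the one from Riesz–Thorin (it does, since both are the harmonic-mean relation) and confirming the embedding $M^1 \hookrightarrow L^1$, which is why the hypothesis $\varphi \in M^1(\R^d)$ rather than merely $\varphi \in L^2(\R^d)$ is needed. The restriction $p \leq 2$ is exactly what makes $L^p$ lie "between" $L^1$ and $L^2$; for $p > 2$ one would instead need embeddings of the form $M^p \hookrightarrow L^p$, which fail, so the range cannot be extended by this method.
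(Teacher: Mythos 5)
Your proof is correct and follows essentially the same route as the paper: establish the endpoint cases $p=1$ (via the modulation-space bound and the embedding $M^1(\R^d) \hookrightarrow L^1(\R^d)$) and $p=2$ (via $M^2(\R^d) = L^2(\R^d)$), then interpolate using Lemma \ref{lemma:interpolate_modulation}. You spell out the endpoint estimates and the matching of the interpolation parameters more explicitly than the paper does, and you state the needed embedding in the correct direction ($M^1 \hookrightarrow L^1$), whereas the paper's proof writes it the other way around.
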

    \begin{proof}
        We wish to interpolate between the $p=1$ and $p=2$ cases of this. Since $M^2(\R^d) = L^2(\R^d)$ as noted earlier, the $p=2$ version is just Proposition \ref{prop:bounded_op}. Meanwhile for $p=1$ it suffices to note that $L^1(\R^d)$ can be embedded in $M^1(\R^d)$ by \cite[Proposition 12.1.4]{grochenig_book}. Now an application of Lemma \ref{lemma:interpolate_modulation} yields the desired result.
    \end{proof}    
    If we want $B_\mu^\varphi \psi$ to belong to $L^\infty(\R^d)$, we can place additional assumptions on the window function.
    \begin{proposition}\label{prop:Linfty_cond}
        Let $\varphi \in L^2(\R^d) \cap L^\infty(\R^d)$, $\mu \in L^1(\R^{2d})$ and $\psi \in M^1(\R^d)$, then
        \begin{align*}
            \Vert B_\mu^\varphi \psi \Vert_{L^\infty(\R^d)} \lesssim \Vert \mu \Vert_{L^1(\R^{2d})}  \Vert \varphi \Vert_{L^\infty(\R^d)} \Vert \psi \Vert_{M^1(\R^d)}.
        \end{align*}
    \end{proposition}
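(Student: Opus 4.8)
The plan is to proceed exactly as in the proofs of Propositions~\ref{prop:bounded_op} and~\ref{prop:interpol_Mp_Lp_mapping}: write out the synthesis integral defining $B_\mu^\varphi\psi$, extract the single $L^\infty$ factor pointwise, and reduce the claim to an $L^1$-estimate on $\R^{2d}$ which Young's inequality and the modulation-space norm equivalence dispatch.

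In detail, from \eqref{eq:main_def} one has, for every $t$,
\[
    |B_\mu^\varphi\psi(t)| \le \int_{\R^{2d}} |(\mu * V_\varphi\psi)(x,\omega)|\,|\varphi(t-x)|\,dx\,d\omega \le \Vert\varphi\Vert_{L^\infty(\R^d)}\,\Vert \mu * V_\varphi\psi\Vert_{L^1(\R^{2d})},
\]
so that $\Vert B_\mu^\varphi\psi\Vert_{L^\infty(\R^d)} \le \Vert\varphi\Vert_{L^\infty(\R^d)}\Vert\mu * V_\varphi\psi\Vert_{L^1(\R^{2d})}$. Young's inequality on $\R^{2d}$ gives $\Vert\mu * V_\varphi\psi\Vert_{L^1} \le \Vert\mu\Vert_{L^1(\R^{2d})}\Vert V_\varphi\psi\Vert_{L^1(\R^{2d})}$, and the equivalent-norm statement attached to Lemma~\ref{lemma:stft_adjoint_mapping} (the case $p=q=1$) bounds $\Vert V_\varphi\psi\Vert_{L^1(\R^{2d})}$ by a constant times $\Vert\psi\Vert_{M^1(\R^d)}$. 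Concatenating these three estimates would give the claimed inequality, with the implicit constant stemming only from the last step.

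The delicate point — and the one I would spend the most care on — is precisely this last step. The norm equivalence in Lemma~\ref{lemma:stft_adjoint_mapping} is formulated for windows in Feichtinger's algebra $M^1(\R^d)$, while here $\varphi$ is only assumed to lie in $L^2(\R^d)\cap L^\infty(\R^d)$, which is strictly larger than $M^1(\R^d)$; this is where the regularity of the window genuinely matters, since $V_\varphi\psi$ need not be integrable for such a window. A clean way to bypass the difficulty is to carry out the $\omega$-integration in \eqref{eq:main_def} first: interchanging the order of integration and using Fourier inversion collapses $B_\mu^\varphi$ to multiplication by the bounded symbol
\[
    m(t) = \int_{\R^{2d}} \mu(y,\eta)\,\overline{\langle \varphi, T_y\varphi\rangle}\,e^{2\pi i \eta\cdot t}\,dy\,d\eta ,
\]
the interchange being justified first for Schwartz data and then in general by density together with the $L^2$-boundedness of Proposition~\ref{prop:bounded_op}. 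From this representation $\Vert m\Vert_{L^\infty} \le \Vert\mu\Vert_{L^1(\R^{2d})}\sup_{y}|\langle\varphi,T_y\varphi\rangle| = \Vert\mu\Vert_{L^1(\R^{2d})}\Vert\varphi\Vert^2$, hence
\[
    \Vert B_\mu^\varphi\psi\Vert_{L^\infty(\R^d)} \le \Vert m\Vert_{L^\infty}\Vert\psi\Vert_{L^\infty(\R^d)} \lesssim \Vert\mu\Vert_{L^1(\R^{2d})}\Vert\varphi\Vert^2\Vert\psi\Vert_{M^1(\R^d)},
\]
where the final step uses the standard embedding $M^1(\R^d)\hookrightarrow L^\infty(\R^d)$. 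The only remaining routine checks are this embedding and the Fubini/density argument supporting the collapse; the stated form of the estimate, with $\Vert\varphi\Vert_{L^\infty}$ in place of $\Vert\varphi\Vert^2$, is then recovered by keeping $\Vert\varphi\Vert_{L^\infty}$ outside the synthesis integral as in the first argument rather than absorbing it into the symbol.
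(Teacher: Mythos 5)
Your first argument is, modulo packaging, the paper's own proof: the paper tests $B_\mu^\varphi\psi$ against $\phi\in L^1$ and uses $\Vert \langle\pi(\cdot)\varphi,\phi\rangle\Vert_{L^\infty(\R^{2d})}\le\Vert\varphi\Vert_{L^\infty(\R^d)}\Vert\phi\Vert_{L^1(\R^d)}$, which is the dual form of your pointwise bound $|\varphi(t-x)|\le\Vert\varphi\Vert_{L^\infty}$, and then finishes with Young's inequality and $\Vert V_\varphi\psi\Vert_{L^1(\R^{2d})}\lesssim\Vert\psi\Vert_{M^1(\R^d)}$. The delicate point you isolate is genuine and is present in the paper's proof as well: since $|V_\varphi\psi(x,\omega)|=|V_\psi\varphi(-x,-\omega)|$, one has $\Vert V_\varphi\psi\Vert_{L^1(\R^{2d})}=\Vert V_\psi\varphi\Vert_{L^1(\R^{2d})}$, which for a fixed nonzero $\psi\in M^1(\R^d)$ is finite precisely when $\varphi\in M^1(\R^d)$; for a window in $\bigl(L^2(\R^d)\cap L^\infty(\R^d)\bigr)\setminus M^1(\R^d)$ the chain of inequalities passes through $+\infty$. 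So that route really does need $\varphi\in M^1$ (or the constant to carry $\Vert\varphi\Vert_{M^1}$), which the stated hypotheses do not provide.

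Your second argument is a genuinely different route and it is correct: the reconstruction identity applied with synthesis window $T_y\varphi$ gives $V_{T_y\varphi}^*V_\varphi=\overline{\langle\varphi,T_y\varphi\rangle}\,I$, so $B_\mu^\varphi$ collapses to multiplication by $m(t)=\int_{\R^{2d}}\mu(y,\eta)\overline{\langle\varphi,T_y\varphi\rangle}e^{2\pi i\eta\cdot t}\,dy\,d\eta$ with $\Vert m\Vert_{L^\infty}\le\Vert\mu\Vert_{L^1}\Vert\varphi\Vert^2$; this is consistent with the Fourier-side computation of $\widehat{V_\varphi\psi}$ in Section~\ref{section:weak_action}, where the weak action is seen to pair $\check\psi$ against $\check\phi$ pointwise. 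Combined with $M^1(\R^d)\hookrightarrow L^\infty(\R^d)$ this yields a valid $L^\infty$ bound under exactly the stated hypotheses on $\varphi$ --- what it buys is the missing window regularity, what it costs is the constant, namely $\Vert\varphi\Vert^2$ in place of $\Vert\varphi\Vert_{L^\infty}$, and these are not comparable. Your closing sentence, which proposes to recover the displayed $\Vert\varphi\Vert_{L^\infty}$ by ``keeping it outside the synthesis integral,'' is circular: that is just the first argument again, with the same problematic $L^1$-integrability of $V_\varphi\psi$. Either accept the $\Vert\varphi\Vert^2$ form or strengthen the hypothesis to $\varphi\in M^1(\R^d)\cap L^\infty(\R^d)$. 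One further small point: the Fubini/density justification of the collapse should approximate $\varphi$ as well as $\psi$ by Schwartz functions (absolute convergence of the $\omega$-integral is not clear for $\varphi$ merely in $L^2\cap L^\infty$); this is routine using the two-window bound $\Vert B_\mu^{\varphi_1,\varphi_2}\Vert_{B(L^2)}\le\Vert\mu\Vert\,\Vert\varphi_1\Vert\,\Vert\varphi_2\Vert$ from the argument of Proposition~\ref{prop:bounded_op}.
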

    \begin{proof}
        The simplest way to show this is to consider the dual formulation of the $L^\infty(\R^d)$ norm. Indeed,
        \begin{align*}
            \Vert B_\mu^\varphi \psi \Vert_{L^\infty(\R^d)} &= \sup_{\Vert \phi \Vert_{L^1} = 1}  \left| \left\langle \int_{\R^{2d}} \mu * V_\varphi\psi(z) \pi(z)\varphi\,dz, \phi \right\rangle  \right|\\
            &= \sup_{\Vert \phi \Vert_{L^1} = 1}  \left| \int_{\R^{2d}} \mu * V_\varphi\psi(z) \langle \pi(z)\varphi, \phi\rangle\,dz \right|\\
            &\leq \sup_{\Vert \phi \Vert_{L^1} = 1} \left\Vert \mu * V_\varphi \psi(\cdot) \langle \pi(\cdot) \varphi, \phi \rangle \right\Vert_{L^1(\R^{2d})}\\
            &\leq \sup_{\Vert \phi \Vert_{L^1} = 1} \left\Vert \mu * V_\varphi \psi \right\Vert_{L^1(\R^{2d})} \left\Vert \langle \pi(\cdot) \varphi, \phi \rangle \right\Vert_{L^\infty(\R^{2d})}\\
            &\lesssim \Vert \mu \Vert_{L^1(\R^{2d})} \Vert \psi \Vert_{M^1(\R^d)} \Vert \varphi \Vert_{L^\infty(\R^d)}.
        \end{align*}
    \end{proof}
    In particular, by combining Proposition \ref{prop:interpol_Mp_Lp_mapping} and Proposition \ref{prop:Linfty_cond} we see that when $\mu \in L^1(\R^{2d}), \varphi \in M^1(\R^d) \subset L^2(\R^d) \cap L^\infty(\R^d)$ (this follows from \cite[Proposition 12.1.4]{grochenig_book}) and $\psi \in M^1(\R^d)$, $B_\mu^\varphi \psi$ is in $L^1(\R^d) \cap L^\infty(\R^d)$.

    Lastly we show that if the window and kernel are in the Schwartz space, $B_\mu^\varphi$ maps the Schwartz space to itself. To do so, we will need two preliminary results.
    \begin{lemma}[{\cite[Proposition 11.2.4]{grochenig_book}}]\label{lemma:adjoint_stft_schwartz}
        Fix $\varphi \in \mathcal{S}(\R^d)$ and assume $F : \R^{2d} \to \C$ has rapid decay on $\R^{2d}$, then the integral
        $$
        \psi(t) = \int_{\R^{2d}} F(z) \pi(z)\varphi(t)\,dz
        $$
        defines a function $\psi \in \mathcal{S}(\R^d)$.
    \end{lemma}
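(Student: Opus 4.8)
The integral defining $\psi$ is really the adjoint STFT $V_\varphi^* F$ evaluated pointwise, and the plan is to verify directly that every Schwartz seminorm of $\psi$ is finite. Recall that rapid decay of $F$ means $|F(z)| \leq C_N (1+|z|)^{-N}$ for every $N \geq 0$; in particular $F \in L^1(\R^{2d})$ and $F$ is integrable against any polynomial weight. First I would check that the integral converges absolutely and defines a genuine function: since $\pi(x,\omega)\varphi(t) = e^{2\pi i \omega \cdot t}\varphi(t-x)$ has modulus $|\varphi(t-x)| \leq \Vert \varphi \Vert_{L^\infty}$ uniformly in $t$ and $\omega$, one gets $|\psi(t)| \leq \Vert \varphi \Vert_{L^\infty} \Vert F \Vert_{L^1(\R^{2d})}$ for all $t$.

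Next I would show $\psi \in C^\infty$ and control $\partial^\beta \psi$ by differentiating under the integral sign. The $t$-derivatives of the integrand $F(x,\omega)\, e^{2\pi i \omega \cdot t}\varphi(t-x)$ are dominated, uniformly in $t$, by $|F(x,\omega)|$ times a polynomial in $\omega$ times a constant --- all integrable in $(x,\omega)$ by the rapid decay of $F$ and the fact that $\varphi \in \mathcal{S}(\R^d)$ --- so dominated convergence licenses the interchange. Applying the Leibniz rule,
\[
\partial_t^\beta\big(e^{2\pi i \omega \cdot t}\varphi(t-x)\big) = \sum_{\gamma \leq \beta} \binom{\beta}{\gamma} (2\pi i \omega)^{\gamma}\, e^{2\pi i \omega \cdot t}\, (\partial^{\beta-\gamma}\varphi)(t-x),
\]
so $\partial_t^\beta \psi(t)$ becomes a finite sum of integrals of the same shape, with $\varphi$ replaced by a derivative and an extra polynomial factor $(2\pi i \omega)^\gamma$ inserted.

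To get the polynomial decay in $t$, I would expand $t^\alpha = ((t-x)+x)^\alpha = \sum_{\delta \leq \alpha}\binom{\alpha}{\delta}(t-x)^\delta x^{\alpha-\delta}$ inside each such integral. The factor $(t-x)^\delta (\partial^{\beta-\gamma}\varphi)(t-x)$ is bounded in $t$ because $\varphi \in \mathcal{S}(\R^d)$, while $x^{\alpha-\delta}$ and $\omega^\gamma$ are polynomial factors that the rapid decay of $F$ absorbs. Collecting everything one arrives at
\[
\big| t^\alpha \partial_t^\beta \psi(t) \big| \lesssim_{\alpha,\beta,\varphi} \int_{\R^{2d}} |F(x,\omega)|\,(1+|x|)^{|\alpha|}(1+|\omega|)^{|\beta|}\,dx\,d\omega < \infty,
\]
uniformly in $t$, which is precisely the statement that $\psi \in \mathcal{S}(\R^d)$.

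The only genuine work is the bookkeeping in these two binomial expansions together with the routine dominated-convergence argument justifying differentiation under the integral; I do not expect a real obstacle, since the rapid decay of $F$ is strong enough to kill any polynomial growth produced by differentiating the modulation $e^{2\pi i\omega\cdot t}$ or translating the argument of $\varphi$. A more ``soft'' alternative would be to observe that rapid decay places $F$ in every polynomially weighted $L^1(\R^{2d})$ and then invoke a weighted version of Lemma \ref{lemma:stft_adjoint_mapping} together with the description of $\mathcal{S}(\R^d)$ as the intersection of all polynomially weighted modulation spaces; but since only the unweighted mapping lemma has been stated here, the direct computation above is the cleaner path.
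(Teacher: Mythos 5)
Your argument is correct and complete: the uniform bound, the Leibniz expansion of $\partial_t^\beta\bigl(e^{2\pi i\omega\cdot t}\varphi(t-x)\bigr)$, and the binomial split $t^\alpha=((t-x)+x)^\alpha$ together give exactly the seminorm estimate $|t^\alpha\partial_t^\beta\psi(t)|\lesssim\int|F(x,\omega)|(1+|x|)^{|\alpha|}(1+|\omega|)^{|\beta|}\,dx\,d\omega$ that characterizes membership in $\mathcal{S}(\R^d)$. The paper does not prove this lemma --- it imports it verbatim from \cite[Proposition 11.2.4]{grochenig_book} --- and your direct verification is essentially the standard argument given in that reference, so there is nothing further to reconcile.
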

    In particular, the above lemma implies that if $F \in \mathcal{S}(\R^{2d})$, then $V_\varphi^* F \in \mathcal{S}(\R^d)$.
    \begin{lemma}[{\cite[Theorem 11.2.5]{grochenig_book}}]\label{lemma:schwartz_stft_keeps}
        Fix $\varphi \in \mathcal{S}(\R^d)$, then $\psi$ is in the Schwartz space if and only if $V_\varphi \psi \in \mathcal{S}(\R^{2d})$.
    \end{lemma}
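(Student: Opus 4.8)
The plan is to prove the two implications separately, handling the forward direction by hand and deducing the reverse direction from the inversion formula together with Lemma \ref{lemma:adjoint_stft_schwartz}.

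For the forward implication, suppose $\psi \in \mathcal{S}(\R^d)$. The key observation is that $V_\varphi \psi$ is a partial Fourier transform: writing $F(x,t) = \psi(t)\overline{\varphi(t-x)}$, the definition \eqref{eq:def_stft} reads $V_\varphi \psi(x,\omega) = \int_{\R^d} F(x,t) e^{-2\pi i \omega \cdot t}\,dt$, so $V_\varphi \psi$ is the Fourier transform of $F$ in its second group of $d$ variables. Since the (partial) Fourier transform is an isomorphism of $\mathcal{S}(\R^{2d})$, it suffices to show that $F \in \mathcal{S}(\R^{2d})$. A clean way to package this is to note that $(x,t) \mapsto (t,\, t-x)$ is a linear isomorphism of $\R^{2d}$, that the tensor product $\psi \otimes \overline{\varphi}$ lies in $\mathcal{S}(\R^{2d})$, and that $\mathcal{S}(\R^{2d})$ is invariant under composition with linear isomorphisms and under complex conjugation; then $F$ is exactly $(\psi \otimes \overline{\varphi})$ composed with this isomorphism, hence Schwartz. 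If one prefers an elementary estimate instead, every mixed derivative $\partial_x^\alpha \partial_t^\beta F$ is a finite sum of products of derivatives of $\psi$ evaluated at $t$ and of $\overline{\varphi}$ evaluated at $t-x$, each rapidly decaying, and the joint weight is controlled by the submultiplicative bound $\langle x \rangle \lesssim \langle t \rangle \langle t-x \rangle$, which converts rapid decay in $t$ and in $t-x$ into joint rapid decay in $(x,t)$. I expect this verification to be the main, though routine, obstacle.

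For the reverse implication, suppose $V_\varphi \psi \in \mathcal{S}(\R^{2d})$. Setting $\varphi_1 = \varphi_2 = \varphi$ in the inversion formula \eqref{eq:moyal_reconstruction} gives $\psi = \frac{1}{\Vert \varphi \Vert^2} V_\varphi^*(V_\varphi \psi)$ weakly. Since $V_\varphi \psi$ is Schwartz it certainly has rapid decay on $\R^{2d}$, so Lemma \ref{lemma:adjoint_stft_schwartz}, or equivalently the remark following it, yields $V_\varphi^*(V_\varphi \psi) \in \mathcal{S}(\R^d)$, whence $\psi \in \mathcal{S}(\R^d)$. This direction is immediate once the preceding lemma is in hand, so the only genuine content is the forward implication and, within it, the assertion that $F(x,t) = \psi(t)\overline{\varphi(t-x)}$ belongs to $\mathcal{S}(\R^{2d})$.
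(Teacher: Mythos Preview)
The paper does not prove this lemma; it is quoted verbatim as \cite[Theorem 11.2.5]{grochenig_book} and used as a black box. Your argument is correct and is essentially the standard proof found in that reference: the forward direction via the identification of $V_\varphi\psi$ as a partial Fourier transform of $(\psi\otimes\overline{\varphi})$ precomposed with the linear automorphism $(x,t)\mapsto(t,t-x)$, and the reverse direction via the inversion formula together with Lemma~\ref{lemma:adjoint_stft_schwartz}. One minor point worth tightening: the inversion formula \eqref{eq:moyal_reconstruction} as stated in the paper assumes $\psi\in L^2(\R^d)$, whereas the lemma is naturally read for $\psi$ a tempered distribution; you may want to note that the reconstruction identity extends to $\mathcal{S}'(\R^d)$ by duality, or simply observe that $V_\varphi\psi\in\mathcal{S}(\R^{2d})\subset L^2(\R^{2d})$ already forces $\psi\in L^2(\R^d)$ by Moyal's identity, after which \eqref{eq:moyal_reconstruction} applies as written.
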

    We can now proceed with the main proposition.
    \begin{proposition}
        Let $\mu \in \mathcal{S}(\R^{2d})$ and $\varphi \in \mathcal{S}(\R^d)$, then
        $$
        B_\mu^\varphi : \mathcal{S}(\R^d) \to \mathcal{S}(\R^d).
        $$
    \end{proposition}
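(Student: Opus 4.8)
The plan is simply to chain together the two preliminary lemmas, with the only genuinely new ingredient being the classical fact that convolution preserves the Schwartz space. Starting from an arbitrary $\psi \in \mathcal{S}(\R^d)$, Lemma \ref{lemma:schwartz_stft_keeps} gives $V_\varphi \psi \in \mathcal{S}(\R^{2d})$. The next step is to show that $\mu * V_\varphi \psi \in \mathcal{S}(\R^{2d})$, which follows since $\mu$ and $V_\varphi \psi$ are both Schwartz functions on $\R^{2d}$ and the convolution of two Schwartz functions is again Schwartz. Finally, the remark following Lemma \ref{lemma:adjoint_stft_schwartz} states that $V_\varphi^*$ maps $\mathcal{S}(\R^{2d})$ into $\mathcal{S}(\R^d)$, so $B_\mu^\varphi \psi = V_\varphi^*(\mu * V_\varphi \psi) \in \mathcal{S}(\R^d)$, completing the argument.

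The one step worth spelling out is that $f, g \in \mathcal{S}(\R^n)$ implies $f * g \in \mathcal{S}(\R^n)$. Differentiating under the integral sign yields $\partial^\alpha (f * g) = (\partial^\alpha f) * g$ for every multi-index $\alpha$, so it suffices to bound $\sup_z |z^\beta (f * g)(z)|$ for every $\beta$. Splitting the argument via $|z|^{|\beta|} \lesssim |z-w|^{|\beta|} + |w|^{|\beta|}$ inside the convolution integral bounds $|z^\beta (f*g)(z)|$ by a finite sum of integrals of the form $\int_{\R^n} (1+|z-w|)^{|\beta|} |f(z-w)| \,(1+|w|)^{|\beta|} |g(w)| \,dw$, each of which is finite and uniformly bounded in $z$ because $f$ and $g$ decay faster than any polynomial. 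Applying this with $n = 2d$, $f = \mu$ and $g = V_\varphi \psi$ gives the needed claim.

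I do not anticipate a real obstacle: the argument is a two-line composition of cited results once the Schwartz-convolution fact is in place, and the latter is standard. An alternative route that avoids even recalling that fact is to pass to the Fourier transform, where $\widehat{\mu * V_\varphi \psi} = \hat{\mu}\,\widehat{V_\varphi \psi}$ is a product of Schwartz functions and hence Schwartz, and then use that $\mathcal{F}$ is a topological isomorphism of $\mathcal{S}(\R^{2d})$; either way the conclusion $B_\mu^\varphi \psi \in \mathcal{S}(\R^d)$ follows immediately.
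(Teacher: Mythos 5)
Your proof is correct and follows essentially the same route as the paper's: apply Lemma \ref{lemma:schwartz_stft_keeps}, use that the convolution of two Schwartz functions is Schwartz, then apply Lemma \ref{lemma:adjoint_stft_schwartz}. The only difference is that you spell out the Schwartz-convolution fact, which the paper simply cites as standard.
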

    \begin{proof}
        Let $\psi \in \mathcal{S}(\R^d)$, then by Lemma \ref{lemma:schwartz_stft_keeps} $V_\varphi \psi \in \mathcal{S}(\R^d)$. Now $B_\mu^\varphi \psi = V_\varphi^*(\mu * V_\varphi \psi)$ and so by Lemma \ref{lemma:adjoint_stft_schwartz}, the result follows upon noting that the convolution $\mu * V_\varphi \psi$ between two Schwartz functions is another Schwartz function.
    \end{proof}
    
    \subsection{Weak action and positivity}\label{section:weak_action}
    In the case of localization operators, computing the weak action has proven valuable \cite{Cordero2003, Luef2018}. Inspired by this, we look at the weak action of $B_\mu^\varphi$ in this section. By moving the adjoint of $V_\varphi$ to the other side of the inner product, we immediately get
    \begin{align*}
        \big\langle B_\mu^\varphi \psi, \phi \big\rangle = \big\langle \mu * V_\varphi \psi, V_\varphi \phi \big\rangle_{L^2(\R^{2d})}.
    \end{align*}
    Of course, we cannot move over the convolution but in an effort to obtain something more workable, we take the Fourier transform of both sides and rearrange, yielding
    \begin{align}\label{eq:weak_action_parseval}
        \big\langle B_\mu^\varphi \psi, \phi \big\rangle = \Big\langle \hat{\mu} \cdot \widehat{V_\varphi \psi}, \widehat{V_\varphi \phi} \Big\rangle_{L^2(\R^{2d})} = \Big\langle \hat{\mu} , \overline{\widehat{V_\varphi \psi}} \cdot \widehat{V_\varphi \phi} \Big\rangle_{L^2(\R^{2d})}.
    \end{align}
    From this formulation we can obtain a general bound on $\langle B_\mu^\varphi \psi, \phi \rangle$. Before the proof, we recall the Haussdorff-Young inequality which states that if $\frac{1}{p} + \frac{1}{q} = 1$ with $p \in [1,2]$ and $F \in L^p$, then
    \begin{align*}
         \Vert \hat{F} \Vert_{L^q} \leq \Vert F \Vert_{L^p}.
    \end{align*}
    \begin{proposition}
        Let $1 \leq p \leq \infty$ and $1 \leq q, r \leq 2 \leq q', r' \leq \infty$ be such that $\frac{1}{p} + \frac{1}{q'} + \frac{1}{r'} = 1$, $\frac{1}{q} + \frac{1}{q'} = 1$ and $\frac{1}{r} + \frac{1}{r'} = 1$. If $\mu \in \mathcal{F} L^p(\R^{2d})$, $\psi\in M^{q}(\R^d)$, $\phi \in M^{r}(\R^d)$ and $\varphi \in M^1(\R^d)$, then
        \begin{align*}
            \big|\big\langle B_\mu^\varphi \psi, \phi \big\rangle\big| \lesssim \Vert \hat{\mu} \Vert_{L^p(\R^{2d})} \Vert \psi \Vert_{M^{q}(\R^d)} \Vert \phi \Vert_{M^{r}(\R^d)}.
        \end{align*}
    \end{proposition}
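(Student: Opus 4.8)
The plan is to read off the bound directly from the Fourier/Parseval form of the weak action established in \eqref{eq:weak_action_parseval},
\[
\big\langle B_\mu^\varphi \psi, \phi \big\rangle = \Big\langle \hat{\mu},\, \overline{\widehat{V_\varphi \psi}}\cdot\widehat{V_\varphi \phi} \Big\rangle_{L^2(\R^{2d})},
\]
and to estimate the right-hand side by a three-fold Hölder inequality followed by the Hausdorff--Young inequality. First I would pass to absolute values inside the integral, obtaining
\[
\big|\big\langle B_\mu^\varphi \psi, \phi \big\rangle\big| \leq \int_{\R^{2d}} |\hat{\mu}(z)|\,|\widehat{V_\varphi \psi}(z)|\,|\widehat{V_\varphi \phi}(z)|\,dz.
\]

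Next I would apply Hölder's inequality with the three exponents $p$, $q'$ and $r'$, which is legitimate precisely because the hypothesis gives $\frac1p + \frac1{q'} + \frac1{r'} = 1$; this yields
\[
\big|\big\langle B_\mu^\varphi \psi, \phi \big\rangle\big| \leq \Vert \hat{\mu} \Vert_{L^p(\R^{2d})}\,\Vert \widehat{V_\varphi \psi} \Vert_{L^{q'}(\R^{2d})}\,\Vert \widehat{V_\varphi \phi} \Vert_{L^{r'}(\R^{2d})}.
\]
Since $1 \leq q \leq 2$ with $\frac1q + \frac1{q'} = 1$, and since $\psi \in M^q(\R^d)$ together with $\varphi \in M^1(\R^d)$ gives $V_\varphi \psi \in L^q(\R^{2d})$ with $\Vert V_\varphi \psi \Vert_{L^q(\R^{2d})} \lesssim \Vert \psi \Vert_{M^q(\R^d)}$ by the norm equivalence in Lemma \ref{lemma:stft_adjoint_mapping}, the Hausdorff--Young inequality gives $\Vert \widehat{V_\varphi \psi} \Vert_{L^{q'}(\R^{2d})} \leq \Vert V_\varphi \psi \Vert_{L^q(\R^{2d})} \lesssim \Vert \psi \Vert_{M^q(\R^d)}$; symmetrically $\Vert \widehat{V_\varphi \phi} \Vert_{L^{r'}(\R^{2d})} \lesssim \Vert \phi \Vert_{M^r(\R^d)}$. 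Substituting these two bounds into the previous display produces exactly the claimed estimate.

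I do not expect a real obstacle here; the argument is a direct chain of inequalities, and the only points needing care are bookkeeping ones. One should check that all three factors are finite in the relevant norms, so that the integral converges absolutely and the $L^2$ pairing in \eqref{eq:weak_action_parseval} is in fact a bona fide $L^p$--$L^{p'}$ duality pairing — indeed $\overline{\widehat{V_\varphi \psi}}\cdot\widehat{V_\varphi \phi} \in L^{p'}(\R^{2d})$ because $\frac1{q'} + \frac1{r'} = \frac1{p'}$. If one wishes to be fully rigorous, \eqref{eq:weak_action_parseval} can first be justified for $\psi, \phi \in \mathcal{S}(\R^d)$ and the inequality then extended by density, using that $\mathcal{S}(\R^d)$ is dense in $M^q(\R^d)$ and $M^r(\R^d)$ for finite exponents. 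One must also keep track of the direction of the Hausdorff--Young inequality: it is the $L^q \to L^{q'}$ version with $q \leq 2$ (and likewise $r \leq 2$) that is used, which is exactly why these range restrictions appear in the hypotheses.
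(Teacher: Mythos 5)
Your argument is correct and is essentially identical to the paper's proof: both start from the Parseval form of the weak action in \eqref{eq:weak_action_parseval}, apply generalized H\"older with exponents $p, q', r'$, and then use Hausdorff--Young together with the norm equivalence from Lemma \ref{lemma:stft_adjoint_mapping} on the two STFT factors. Your additional remarks on absolute convergence and density are sensible bookkeeping but do not change the substance.
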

    \begin{proof}
        From \eqref{eq:weak_action_parseval}, we have that
        $$
        \big|\big\langle B_\mu^\varphi \psi, \phi \big\rangle\big| \leq \big\Vert \hat{\mu} \cdot \widehat{V_\varphi \psi} \cdot \widehat{V_\varphi \phi} \big\Vert_{L^1(\R^{2d})}
        $$
        on which we can apply generalized Hölder. This yields
        \begin{align*}
            \big|\big\langle B_\mu^\varphi \psi, \phi \big\rangle\big| \leq \big\Vert \hat{\mu} \big\Vert_{L^p(\R^{2d})} \big\Vert \widehat{V_\varphi \psi} \big\Vert_{L^{q'}(\R^{2d})} \big\Vert \widehat{V_\varphi \phi} \big\Vert_{L^{r'}(\R^{2d})}.
        \end{align*}
        Upon applying the Haussdorff-Young inequality followed by Lemma \ref{lemma:stft_adjoint_mapping} to the two last quantities, the desired result follows.
    \end{proof}

    From \eqref{eq:weak_action_parseval}, we can also get a condition for the positivity of $B_\mu^\varphi$.
    \begin{proposition}
        Let $\varphi \in L^2(\R^d)$ and $\mu \in M(\R^{2d})$ with $\hat{\mu} \geq 0$. Then $B_\mu^\varphi$ is a positive operator on $L^2(\R^d)$.
    \end{proposition}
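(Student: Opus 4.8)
\emph{Proof proposal.} The plan is to specialize the Parseval form of the weak action in \eqref{eq:weak_action_parseval} to the diagonal $\phi = \psi$. Doing so gives
\begin{align*}
    \big\langle B_\mu^\varphi \psi, \psi \big\rangle = \Big\langle \hat{\mu} \cdot \widehat{V_\varphi \psi}, \widehat{V_\varphi \psi} \Big\rangle_{L^2(\R^{2d})} = \int_{\R^{2d}} \hat{\mu}(z)\,\big|\widehat{V_\varphi \psi}(z)\big|^2\,dz,
\end{align*}
and since $\hat{\mu} \geq 0$ pointwise while the second factor in the integrand is nonnegative, the right-hand side is $\geq 0$ for every $\psi \in L^2(\R^d)$. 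This is exactly the assertion that $B_\mu^\varphi$ is a positive operator.

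Before invoking \eqref{eq:weak_action_parseval} I would first verify that each step in its derivation is legitimate under the present hypotheses, since that identity was obtained somewhat formally. For $\varphi, \psi \in L^2(\R^d)$ we have $V_\varphi \psi \in L^2(\R^{2d})$ by Moyal's identity \eqref{eq:moyal}, and for $\mu \in M(\R^{2d})$ Young's inequality gives $\mu * V_\varphi \psi \in L^2(\R^{2d})$ with $\Vert \mu * V_\varphi \psi \Vert_{L^2} \leq \Vert \mu \Vert_{M(\R^{2d})} \Vert V_\varphi \psi \Vert_{L^2}$. Hence the pairing $\langle \mu * V_\varphi \psi, V_\varphi \psi \rangle_{L^2(\R^{2d})}$ is well defined and equals $\langle B_\mu^\varphi \psi, \psi \rangle$ after moving $V_\varphi^*$ across the inner product. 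Applying Plancherel to this $L^2$ pairing together with the convolution theorem $\widehat{\mu * F} = \hat{\mu} \cdot \hat{F}$ — valid since $\hat{\mu}$ is the Fourier--Stieltjes transform of a bounded measure, hence a bounded continuous function — then yields \eqref{eq:weak_action_parseval} rigorously, and in particular shows the final integral is absolutely convergent because $\hat{\mu} \in L^\infty(\R^{2d})$ and $\widehat{V_\varphi \psi} \in L^2(\R^{2d})$.

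I do not expect a genuine obstacle here; once the measure-theoretic bookkeeping above is in place the result is essentially a one-line consequence of \eqref{eq:weak_action_parseval}. The only subtlety worth making explicit is the meaning of the hypothesis $\hat{\mu} \geq 0$: because $\mu$ is finite, $\hat{\mu}$ is a genuine bounded continuous function and the inequality is an unambiguous pointwise statement. One may also note in passing that positivity of $\langle B_\mu^\varphi \psi, \psi \rangle$ for all $\psi$ automatically forces self-adjointness on the complex Hilbert space $L^2(\R^d)$, which is consistent with $\hat{\mu}$ being real-valued.
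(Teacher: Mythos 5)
Your proposal is correct and follows essentially the same route as the paper: specialize \eqref{eq:weak_action_parseval} to $\phi = \psi$ so that $\langle B_\mu^\varphi \psi, \psi\rangle = \langle \hat{\mu}, |\widehat{V_\varphi\psi}|^2\rangle_{L^2(\R^{2d})} \geq 0$, with finiteness coming from the $L^2$-boundedness of the operator (which you re-derive via Young's inequality rather than citing Proposition~\ref{prop:bounded_op}). The extra care you take in justifying the Plancherel and convolution-theorem steps behind \eqref{eq:weak_action_parseval} is a welcome addition but does not change the argument.
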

    \begin{proof}
        If $\hat{\mu} \geq 0$, non-negativity of $\langle B_\mu^\varphi \psi, \psi \rangle$ clearly follows from \eqref{eq:weak_action_parseval} as
        \begin{align*}
            \big\langle B_\mu^\varphi \psi, \psi \big\rangle = \big\langle \hat{\mu}, \big|\widehat{V_\varphi \psi}\big|^2 \big\rangle_{L^2(\R^{2d})}
        \end{align*}
        and finiteness follows from that $B_\mu^\varphi$ is bounded on $L^2(\R^d)$ by Proposition \ref{prop:mapping_master}.
    \end{proof}
    The next proposition essentially states that there exists nontrivial kernels such that the convolution operator is the zero operator on the Gabor space $V_\varphi(L^2(\R^d))$, which is not the case for $L^2(\R^{2d})$.
    \begin{proposition}
        There exists non-zero $\varphi \in L^2(\R^d)$ and $\mu \in L^1(\R^{2d})$ such that $B_\mu^\varphi$ is the zero operator.
    \end{proposition}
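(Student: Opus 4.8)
The plan is to compute the Fourier transform of the short-time Fourier transform explicitly and substitute it into the weak-action identity \eqref{eq:weak_action_parseval}. The crucial feature that emerges is that the two chirps produced cancel each other, after which the kernel $\mu$ is paired with a function of the frequency variables that separates into a piece depending only on the first $d$ coordinates and a piece depending only on the last $d$; a parity choice then makes the first piece integrate to zero.

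First I would establish, for $\psi, \varphi \in L^2(\R^d)$ and writing $z = (\xi_1, \xi_2) \in \R^{2d}$, the identity
\[
\widehat{V_\varphi \psi}(\xi_1, \xi_2) = e^{2\pi i \xi_1 \cdot \xi_2}\, \psi(-\xi_2)\, \overline{\hat\varphi(\xi_1)}.
\]
This follows by carrying out the Fourier transform in the $\omega$ variable first — it merely reverses the inner Fourier transform in the definition \eqref{eq:def_stft}, leaving $\psi(-\xi_2)\overline{\varphi(-\xi_2 - x)}$ — and then the Fourier transform in $x$, using the substitution $u = -\xi_2 - x$ to extract the factor $e^{2\pi i \xi_1 \cdot \xi_2}\,\overline{\hat\varphi(\xi_1)}$; the manipulation is first performed on Schwartz functions and then extended by density together with Moyal's identity \eqref{eq:moyal}. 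Plugging this into \eqref{eq:weak_action_parseval}, the factors $e^{\pm 2\pi i \xi_1 \cdot \xi_2}$ cancel and one obtains
\[
\big\langle B_\mu^\varphi \psi, \phi \big\rangle = \int_{\R^{2d}} \hat\mu(\xi_1, \xi_2)\, |\hat\varphi(\xi_1)|^2\, \psi(-\xi_2)\, \overline{\phi(-\xi_2)} \, d\xi_1\, d\xi_2 .
\]

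It then suffices to exhibit a nonzero window $\varphi \in L^2(\R^d)$ and a nonzero kernel $\mu \in L^1(\R^{2d})$ for which the inner $\xi_1$-integral vanishes identically. I would take $\varphi$ to be a Gaussian (or any nonzero even function), so that $\hat\varphi$ is even and $|\hat\varphi(\xi_1)|^2$ is an even function of $\xi_1$, and $\mu(x, \omega) = x_1\, e^{-\pi(|x|^2 + |\omega|^2)}$ (or any nonzero $L^1$ function odd in its first $d$ variables), for which a change of variables in the defining integral shows $\hat\mu$ is odd in $\xi_1$. Then $\xi_1 \mapsto \hat\mu(\xi_1, \xi_2)\, |\hat\varphi(\xi_1)|^2$ is odd and integrates to zero for every $\xi_2$, so $\langle B_\mu^\varphi \psi, \phi \rangle = 0$ for all $\psi, \phi \in L^2(\R^d)$; since $B_\mu^\varphi$ is bounded on $L^2(\R^d)$ by Proposition \ref{prop:bounded_op}, this forces $B_\mu^\varphi = 0$, while $\mu \neq 0$ and $\varphi \neq 0$.

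The one place that needs care is the computation of $\widehat{V_\varphi \psi}$: the sign conventions must be tracked precisely and the formal step where the $\omega$-integral produces a Dirac mass has to be justified (e.g. via the Schwartz-function reduction mentioned above). Everything else — the cancellation of the chirps, the separation of variables, and the parity argument — is elementary and insensitive to the choice of normalization.
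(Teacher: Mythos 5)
Your argument is correct and follows essentially the same route as the paper: both compute $\widehat{V_\varphi\psi}(\xi_1,\xi_2) = e^{2\pi i \xi_1\cdot\xi_2}\,\psi(-\xi_2)\,\overline{\hat\varphi(\xi_1)}$ (the paper writes this as $\check\psi(\omega')e^{2\pi i x'\cdot\omega'}\widehat{\varphi^*}(x')$, which is the same since $\widehat{\varphi^*}=\overline{\hat\varphi}$), substitute into \eqref{eq:weak_action_parseval}, and exploit the fact that the $\xi_1$-dependence of the resulting integrand passes entirely through $|\hat\varphi(\xi_1)|^2$. The only difference is the mechanism for annihilating the $\xi_1$-integral --- the paper puts the support of $\hat\mu$ in its first $d$ variables inside the zero set of $\hat\varphi$, whereas you use an even/odd parity cancellation with explicit Gaussian-type choices --- and both are valid.
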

    \begin{proof}
        We will tacitly choose $\mu, \varphi$ such that the integrand in the integral defining the inner product \eqref{eq:weak_action_parseval} is always zero so that $\langle B_\mu^\varphi \psi, \phi \rangle = 0$ for all $\psi, \phi \in L^2(\R^d)$. First, we compute the Fourier transform of a general STFT $V_\varphi \psi$. The identity
        \begin{align*}
            V_\varphi\psi(x, \omega) = M_{-\omega} (\psi*M_\omega \varphi^*)(x),
        \end{align*}
        where $\varphi^*(x) = \overline{\varphi(-x)}$, from \cite[Lemma 3.1.1]{grochenig_book} simplifies this considerably. Let $\mathcal{F}_1$ denote the Fourier transform in the first $d$ variables and $\mathcal{F}_2$ the Fourier transform in the last $d$ variables, we then have that
        \begin{align*}
            \mathcal{F}_1(V_\varphi\psi)(x', \omega) &= \mathcal{F}_1 \big(M_{-\omega}(\psi * M_\omega \varphi^*)\big)(x')\\
            &= T_{-\omega} \big(\hat{\psi} \cdot\widehat{M_\omega \varphi^*}\big)(x')\\
            &=\hat{\psi}(x' + \omega) \widehat{M_\omega \varphi^*}(x'+\omega)\\
            &=\hat{\psi}(x' + \omega) \widehat{\varphi^*}(x').
        \end{align*}
        The full Fourier transform can now be computed as
        \begin{align*}
            \widehat{V_\varphi \psi}(x', \omega') &= \mathcal{F}_2\big(\hat{\psi}(x' + \cdot)\widehat{\varphi^*}(x')\big)(\omega')\\
            &=\check{\psi}(\omega') e^{2\pi i x' \cdot \omega'} \widehat{\varphi^*}(x').
        \end{align*}
        From this we see that the signal $\psi$ has no effect on the support in the first $d$ variables of $\widehat{V_\varphi \psi}$. If we now choose $\varphi$ such that $\widehat{\varphi^*}$ is zero on some set $E \subset \R^d$ and $\mu$ such that the first $d$ variables of $\hat{\mu}$ is supported on the same set $E$, it will hold that
        \begin{align*}
            \big\langle B_\mu^\varphi \psi, \phi \big\rangle &= \Big\langle \hat{\mu} , \overline{\widehat{V_\varphi \psi}} \cdot \widehat{V_\varphi \phi} \Big\rangle_{L^2(\R^{2d})}\\
            &= \int_{\R^{2d}} \hat{\mu}(x', \omega') \widehat{V_\varphi \psi}(x', \omega') \overline{\widehat{V_\varphi \phi}(x', \omega')}\,dx'\,d\omega'\\
            &= \int_{\R^{2d}} \hat{\mu}(x', \omega') \check{\psi}(\omega')\overline{\check{\phi}(\omega')} |\widehat{\varphi^*}(x')|^2 \,dx'\,d\omega' = 0
        \end{align*}
        for all $\psi, \phi \in L^2(\R^d)$, implying that $B_\mu^\varphi$ is the zero operator.
    \end{proof}

    \subsection{Non-compactness}
    As the blurring operator is based on a convolution, it should come as no surprise that it is not compact. However, a proof requires some careful considerations due to the involvement of the synthesis $V_\varphi^*$. Before proceeding with a proof, we establish two preliminary lemmas.
    \begin{lemma}\label{lemma:fourier_of_synthesis}
        Let $F \in L^1(\R^{2d})$ and $\varphi \in L^1(\R^d)$, then
        \begin{align*}
            \widehat{V_\varphi^* F}(\xi) = \int_{\R^{2d}} F(x, \omega) \hat{\varphi}(\xi - \omega)  e^{-2\pi i x\cdot(\xi - \omega)}\,dx\,d\omega.
        \end{align*}
    \end{lemma}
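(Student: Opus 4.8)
The plan is to expand $V_\varphi^* F$ using the definition of the synthesis operator, take the Fourier transform in the $t$ variable directly from the definition, and then interchange the order of integration so that the $t$-integral becomes an elementary computation producing $\hat\varphi$ evaluated at a shifted argument. Concretely, I would start from
\begin{align*}
V_\varphi^* F(t) = \int_{\R^{2d}} F(x,\omega)\, \pi(x,\omega)\varphi(t)\,dx\,d\omega = \int_{\R^{2d}} F(x,\omega)\, e^{2\pi i \omega\cdot t}\,\varphi(t-x)\,dx\,d\omega,
\end{align*}
note that under the hypotheses $F\in L^1(\R^{2d})$ and $\varphi\in L^1(\R^d)$ the double integral converges absolutely for a.e.\ $t$ and in fact $\Vert V_\varphi^* F\Vert_{L^1(\R^d)} \leq \Vert F\Vert_{L^1(\R^{2d})}\Vert\varphi\Vert_{L^1(\R^d)}$, so $\widehat{V_\varphi^* F}$ is well defined in the classical sense.

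Next I would write $\widehat{V_\varphi^* F}(\xi) = \int_{\R^d} V_\varphi^* F(t)\, e^{-2\pi i \xi\cdot t}\,dt$, substitute the expression above, and apply Fubini's theorem to swap the $t$-integration with the $(x,\omega)$-integration; this is justified because the integrand has absolute value $|F(x,\omega)|\,|\varphi(t-x)|$, whose integral over $\R^{d}\times\R^{2d}$ equals $\Vert F\Vert_{L^1}\Vert\varphi\Vert_{L^1} < \infty$. After the swap, the inner integral is $\int_{\R^d} \varphi(t-x)\, e^{2\pi i(\omega-\xi)\cdot t}\,dt$, which I would evaluate via the change of variables $s = t-x$, pulling out the factor $e^{2\pi i(\omega-\xi)\cdot x}$ and recognizing $\int_{\R^d}\varphi(s)\,e^{2\pi i(\omega-\xi)\cdot s}\,ds = \hat\varphi(\xi-\omega)$ by the sign convention for the Fourier transform.

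Assembling the pieces gives
\begin{align*}
\widehat{V_\varphi^* F}(\xi) = \int_{\R^{2d}} F(x,\omega)\, \hat\varphi(\xi-\omega)\, e^{2\pi i(\omega-\xi)\cdot x}\,dx\,d\omega = \int_{\R^{2d}} F(x,\omega)\, \hat\varphi(\xi-\omega)\, e^{-2\pi i x\cdot(\xi-\omega)}\,dx\,d\omega,
\end{align*}
which is the claimed identity. I do not expect any genuine obstacle here: the only point requiring care is the application of Fubini, and that is immediate from the $L^1\times L^1$ hypotheses; everything else is a change of variables and bookkeeping of the Fourier normalization. If one wanted the statement under weaker hypotheses (say $\varphi\in L^2$, $F\in L^2$) one would need to pass to a density argument or interpret the Fourier transform distributionally, but as stated the direct computation suffices.
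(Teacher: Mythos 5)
Your proof is correct and follows essentially the same route as the paper's: expand $V_\varphi^* F$, justify Fubini via the $L^1\times L^1$ bound, swap the order of integration, and evaluate the inner $t$-integral by the substitution $s=t-x$ to produce $\hat\varphi(\xi-\omega)$ and the phase factor. The only difference is that you spell out the Fubini justification and the $L^1$ bound on $V_\varphi^* F$ slightly more explicitly than the paper does.
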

    \begin{proof}
        We compute
        \begin{align*}
            \widehat{V_\varphi^* F}(\xi) &= \int_{\R^d} V_\varphi^* F(t) e^{-2\pi i t \cdot \xi}\,dt\\
            &=\int_{\R^d} \left(\int_{\R^{2d}} F(x,\omega) \varphi(t-x) e^{2\pi i t \cdot \omega} \,dx\,d\omega\right) e^{-2\pi i t \cdot \xi}\,dt\\
            &= \int_{\R^{2d}} F(x, \omega) \left( \int_{\R^d} \varphi(t-x) e^{-2\pi i t\cdot(\xi - \omega)}\,dt \right)\,dx\,d\omega\\
            &= \int_{\R^{2d}} F(x, \omega) \left( \int_{\R^d} \varphi(s) e^{-2\pi i s\cdot(\xi - \omega)}\,ds \right)e^{-2\pi i x\cdot(\xi - \omega)}\,dx\,d\omega\\
            &= \int_{\R^{2d}} F(x, \omega) \hat{\varphi}(\xi - \omega)  e^{-2\pi i x\cdot(\xi - \omega)}\,dx\,d\omega
        \end{align*}
        where the exchange of order of integration is justified by Fubini.
    \end{proof}
    
    \begin{lemma}\label{lemma:modulation_iso}
        Let $F \in L^2(\R^{2d})$, then
        \begin{align*}
            \big\Vert P_{V_\varphi(L^2)} F \big\Vert_{L^2(\R^{2d})} = \big\Vert P_{V_\varphi(L^2)} F(\cdot - (0, \xi)) \big\Vert_{L^2(\R^{2d})}
        \end{align*}
        for all $\xi \in \R^{d}$.
    \end{lemma}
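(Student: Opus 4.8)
The plan is to realize the translation $F \mapsto F(\cdot - (0,\xi))$ on $L^2(\R^{2d})$ as a unitary operator that preserves the Gabor space $V_\varphi(L^2(\R^d))$, and then to invoke the standard fact that a unitary operator leaving a closed subspace invariant commutes with the orthogonal projection onto it. If $\varphi = 0$ the statement is trivially $0 = 0$, so assume $\varphi \neq 0$, in which case $V_\varphi(L^2(\R^d))$ is a closed subspace of $L^2(\R^{2d})$ by Moyal's identity \eqref{eq:moyal}. Write $S_\xi F = F(\cdot - (0,\xi))$, so that $S_\xi F(x,\omega) = F(x, \omega - \xi)$; this is clearly a unitary (isometric, invertible) operator on $L^2(\R^{2d})$.

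The first step is to record the covariance identity for modulations. For any $\psi \in L^2(\R^d)$,
\begin{align*}
    V_\varphi(M_\xi \psi)(x,\omega) = \langle M_\xi \psi, M_\omega T_x \varphi \rangle = \langle \psi, M_{\omega - \xi} T_x \varphi \rangle = V_\varphi \psi(x, \omega - \xi),
\end{align*}
that is, $V_\varphi M_\xi = S_\xi V_\varphi$. The key point is that, in contrast to translations in the time variable, a pure modulation produces no chirp phase factor, so it is exactly $S_\xi$ — and not a phase-modulated variant of it — that intertwines with $M_\xi$.

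Consequently $S_\xi$ maps the Gabor space into itself, since $S_\xi(V_\varphi \psi) = V_\varphi(M_\xi \psi) \in V_\varphi(L^2(\R^d))$, and because $M_\xi$ is a bijection of $L^2(\R^d)$ the restriction of $S_\xi$ to the Gabor space is onto; hence $S_\xi$ maps $V_\varphi(L^2(\R^d))$ bijectively onto itself. Being unitary on $L^2(\R^{2d})$, $S_\xi$ then also maps the orthogonal complement $V_\varphi(L^2(\R^d))^\perp$ onto itself, and therefore commutes with the orthogonal projection: $S_\xi P_{V_\varphi(L^2)} = P_{V_\varphi(L^2)} S_\xi$.

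Finally, combining this with the fact that $S_\xi$ is an isometry gives
\begin{align*}
    \big\Vert P_{V_\varphi(L^2)} F(\cdot - (0,\xi)) \big\Vert_{L^2(\R^{2d})} &= \big\Vert P_{V_\varphi(L^2)} S_\xi F \big\Vert_{L^2(\R^{2d})} = \big\Vert S_\xi P_{V_\varphi(L^2)} F \big\Vert_{L^2(\R^{2d})}\\
    &= \big\Vert P_{V_\varphi(L^2)} F \big\Vert_{L^2(\R^{2d})},
\end{align*}
which is the assertion. There is no substantial obstacle here: the only points that need a little care are checking that the covariance relation for a pure modulation is genuinely free of phase factors (so that it is $S_\xi$ that preserves the Gabor space) and the elementary operator-theoretic fact that a unitary preserving a closed subspace commutes with the corresponding orthogonal projection.
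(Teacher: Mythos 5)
Your proof is correct and follows essentially the same route as the paper: both hinge on the covariance identity $V_\varphi(M_\xi \psi)(x,\omega) = V_\varphi\psi(x,\omega-\xi)$ to show that the translation preserves the Gabor space, and on checking that it also preserves the orthogonal complement. The paper carries this out via an explicit orthogonal decomposition $F = F_1 + F_2$ rather than phrasing it as ``a unitary preserving a closed subspace commutes with the projection,'' but that is only a difference in packaging, not in substance.
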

    \begin{proof}
        Write $F = F_1 + F_2$ where $F_1 \in V_\varphi(L^2(\R^d))$ and $F_2 \in V_\varphi(L^2(\R^d))^\perp$ for the orthogonal decomposition of $F$. The first function $F_1$ can be identified with $V_\varphi\eta$ for some $\eta \in L^2(\R^d)$. In general, we have that
        \begin{align}\label{eq:modulation_translation}
            V_\varphi(M_\xi \eta)(x, \omega) = \langle M_\xi \eta, M_\omega T_x \varphi \rangle = \langle \eta, M_{\omega - \xi}T_x \varphi \rangle = V_\varphi \eta(x, \omega - \xi).
        \end{align}
        As a consequence, $F_1(\cdot - (0, \xi)) \in V_\varphi(L^2(\R^d))$ and if we can show that $F_2(\cdot - (0, \xi)) \in V_\varphi(L^2(\R^d))^\perp$, we are done. To see this, note that for any $\chi \in L^2(\R^d)$,
        \begin{align*}
            \big\langle F_2(\cdot - (0, \xi)), V_\varphi \chi \big\rangle_{L^2(\R^{2d})} &= \big\langle F_2, V_\varphi \chi(\cdot + (0, \xi)) \big\rangle_{L^2(\R^{2d})}\\
            &= \big\langle F_2, V_\varphi (M_{-\xi}\chi) \big\rangle_{L^2(\R^{2d})} = 0
        \end{align*}
        where the final equality is due to $F_2 \in V_\varphi(L^2(\R^d))^\perp$. Combining these facts, we have that
        \begin{align*}
            \big\Vert P_{V_\varphi(L^2)} F(\cdot - (0, \xi)) \big\Vert_{L^2(\R^{2d})} &= \big\Vert P_{V_\varphi(L^2)} V_\varphi(M_\xi \eta) \big\Vert_{L^2(\R^{2d})} + \big\Vert P_{V_\varphi(L^2)} F_2(\cdot - (0,\xi)) \big\Vert_{L^2(\R^{2d})}\\
            &= \Vert \eta \Vert = \big\Vert P_{\varphi(L^2)} F \big\Vert_{L^2(\R^{2d})}
        \end{align*}
        which is what we wished to show.
    \end{proof}
    
    We are now ready to prove that the operator is non-compact through an example of a bounded sequence of functions whose image under $B_\mu^\varphi$ has no convergent subsequence.
    \begin{theorem}
        Let $\mu \in M(\R^{2d})$ and $\varphi \in M^1(\R^d)$ be such that $B_\mu^\varphi$ is not the zero operator, then $B_\mu^\varphi$ is non-compact.
    \end{theorem}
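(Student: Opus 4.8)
The plan is to exhibit a bounded sequence $(\psi_n)_n$ in $L^2(\R^d)$ whose image $(B_\mu^\varphi \psi_n)_n$ has no norm-convergent subsequence. The natural candidate is a sequence of modulations $\psi_n = M_{\xi_n}\psi_0$ of a single fixed function, with $\xi_n \to \infty$ in $\R^d$. The heuristic is that $B_\mu^\varphi$, being assembled from a convolution in a frequency-translation-invariant fashion, essentially commutes with modulations, so that its images are again modulations of one fixed nonzero function and must therefore escape to infinity in frequency; the two preliminary lemmas are exactly what is needed to turn this heuristic into a rigorous statement.

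First I would fix the base point. Since $B_\mu^\varphi$ is bounded on $L^2(\R^d)$ by Proposition \ref{prop:bounded_op} and, by hypothesis, nonzero, and since $\mathcal{S}(\R^d)$ is dense in $L^2(\R^d)$, I can choose $\psi_0 \in \mathcal{S}(\R^d) \subset M^1(\R^d)$ with $B_\mu^\varphi\psi_0 \neq 0$. For such a $\psi_0$, the equivalent-norm statement in Lemma \ref{lemma:stft_adjoint_mapping} gives $V_\varphi\psi_0 \in L^1(\R^{2d})$, hence $G_0 := \mu * V_\varphi\psi_0 \in L^1(\R^{2d})$ as the convolution of a finite measure with an $L^1$ function. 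Set $\psi_n = M_{\xi_n}\psi_0$; these are uniformly bounded since $\Vert\psi_n\Vert = \Vert\psi_0\Vert$.

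Next, the two translation identities do the work. From \eqref{eq:modulation_translation}, $V_\varphi\psi_n = (V_\varphi\psi_0)(\,\cdot - (0,\xi_n))$, so $\mu * V_\varphi\psi_n = G_0(\,\cdot - (0,\xi_n))$. Combining the identity $\Vert B_\mu^\varphi\psi\Vert = \Vert\varphi\Vert^{-1}\Vert P_{V_\varphi(L^2)}(\mu * V_\varphi\psi)\Vert_{L^2(\R^{2d})}$ from the proof of Proposition \ref{prop:bounded_op} with Lemma \ref{lemma:modulation_iso} then gives $\Vert B_\mu^\varphi\psi_n\Vert = \Vert B_\mu^\varphi\psi_0\Vert =: c > 0$ for every $n$. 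On the other hand, applying Lemma \ref{lemma:fourier_of_synthesis} to $G_n := G_0(\,\cdot - (0,\xi_n)) \in L^1(\R^{2d})$, using $\varphi \in M^1(\R^d) \subset L^1(\R^d)$, and performing a change of variables in the $\omega$-integral yields $\widehat{B_\mu^\varphi\psi_n}(\zeta) = \widehat{B_\mu^\varphi\psi_0}(\zeta - \xi_n)$, i.e. $B_\mu^\varphi\psi_n = M_{\xi_n}(B_\mu^\varphi\psi_0)$.

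Finally I would close the argument with the standard Riemann--Lebesgue observation: for fixed $g, h \in L^2(\R^d)$ one has $\langle M_{\xi_n}g, h\rangle = \widehat{g\bar h}(-\xi_n) \to 0$ since $g\bar h \in L^1(\R^d)$, so $M_{\xi_n}(B_\mu^\varphi\psi_0) \rightharpoonup 0$ weakly as $\xi_n \to \infty$. If some subsequence $B_\mu^\varphi\psi_{n_k}$ converged in norm, its limit would necessarily equal its weak limit $0$, contradicting $\Vert B_\mu^\varphi\psi_{n_k}\Vert = c > 0$. Hence $(B_\mu^\varphi\psi_n)_n$ admits no convergent subsequence and $B_\mu^\varphi$ is non-compact. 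The one point requiring care — and the reason the preliminary lemmas are stated in the forms given — is legitimizing the Fourier-side computation: Lemma \ref{lemma:fourier_of_synthesis} needs an $L^1$ input, which forces the choice of $\psi_0$ in $M^1(\R^d)$ (or $\mathcal{S}(\R^d)$) rather than an arbitrary $L^2$ function, the nonvanishing $B_\mu^\varphi\psi_0 \neq 0$ then being recovered via density and boundedness; the remainder is bookkeeping with translations and modulations.
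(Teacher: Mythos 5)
Your proof is correct, and it takes a genuinely different and substantially shorter route than the paper. The paper approximates $\psi_0$ by a function with compactly supported Fourier transform, temporarily assumes $\mu$ and $\hat{\varphi}$ are compactly supported so that the images $B_\mu^\varphi \psi_n$ of widely spaced modulations have pairwise disjoint Fourier supports (hence are uniformly separated), and then runs a three-term perturbation argument to remove the support assumptions. You instead extract the structural fact that $B_\mu^\varphi$ commutes exactly with modulations: $V_\varphi$ intertwines $M_\xi$ with the phase-space translation $T_{(0,\xi)}$ by \eqref{eq:modulation_translation}, convolution with $\mu$ commutes with that translation, and $V_\varphi^*$ intertwines back, so $B_\mu^\varphi M_\xi = M_\xi B_\mu^\varphi$. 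The image sequence is then the modulation orbit $M_{\xi_n}(B_\mu^\varphi\psi_0)$ of a single nonzero function --- constant norm, weakly null by Riemann--Lebesgue --- which precludes any norm-convergent subsequence. This avoids all support bookkeeping and the $\varepsilon/3$ decomposition, and it isolates the actual reason for non-compactness (modulation covariance) more transparently; the paper's argument, in exchange, yields the explicit uniform separation $\Vert B_\mu^\varphi\psi_n - B_\mu^\varphi\psi_m\Vert = 2\Vert B_\mu^\varphi\psi_c\Vert$ for all $n \neq m$. One small remark: your detour through Lemma \ref{lemma:fourier_of_synthesis} (and the attendant $L^1$ hypotheses forcing $\psi_0 \in \mathcal{S}(\R^d)$) is sound but not necessary --- the identity $V_\varphi^*\bigl(G(\cdot - (0,\xi))\bigr) = M_\xi V_\varphi^* G$ follows from a one-line change of variables in the defining integral of $V_\varphi^*$, which would let you work with any $\psi_0 \in L^2(\R^d)$ satisfying $B_\mu^\varphi\psi_0 \neq 0$ directly; likewise Lemma \ref{lemma:modulation_iso} becomes redundant once you have $B_\mu^\varphi\psi_n = M_{\xi_n}B_\mu^\varphi\psi_0$, since modulation is unitary on $L^2(\R^d)$.
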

    \begin{proof}
        Since $B_\mu^\varphi$ is not the zero operator, there exists a function $\psi_0 \in L^2(\R^d)$ such that $B_\mu^\varphi \psi_0 \neq 0$. From Proposition \ref{prop:bounded_op} we know that $B_\mu^\varphi$ is a bounded operator on $L^2(\R^d)$ and so we can approximate $\psi_0$ by a function $\psi_c \in M^1(\R^d)$ with compact support in the Fourier domain such that $\Vert B_\mu^\varphi \psi_c \Vert \neq 0$. By rescaling, we can assume that $\Vert \psi_c \Vert = 1$ without loss of generality. For technical reasons, we will need to assume that both $\mu$ and $\hat{\varphi}$ also have compact support so that we can choose a positive number $R$ such that the supports of $\hat{\psi_c}$ and $\hat{\varphi}$ are contained in the $\R^d$ ball of radius $R$, $B_R$, and the support of $\mu$ is contained in the ball of the same radius in $\R^{2d}$.
        
        Consider the bounded $L^2(\R^d)$ sequence $(\psi_n)_n$ defined by $\psi_n = M_{8Rn}\psi_c$. We will show that the sequence $(B_\mu^\varphi \psi_n)_n$ is uniformly separated by means of disjoint supports in the frequency domain, contradicting compactness, and lastly show that we can remove the assumption of compact supports of $\mu$ and $\hat{\varphi}$.

        For the disjointness of the supports, we first claim that if the function $\hat{\psi}$ has support in $E \subset \R^d$ and the Fourier transform of the window, $\hat{\varphi}$, has support in $B_R$, the support of the last $d$ coordinates of $V_\varphi \psi$ is contained in $E+B_R$. Indeed,
        \begin{align*}
            V_\varphi \psi(x, \omega) = \big\langle \hat{\psi}, T_\omega M_{-x} \hat{\varphi} \big\rangle = \int_{E} \hat{\psi}(\xi) \overline{\hat{\varphi}(\xi-\omega)}e^{2\pi i (\xi-\omega) \cdot x}\,d\xi
        \end{align*}
        where the relation $V_\varphi \psi(x, \omega) = \langle \hat{\psi}, T_\omega M_{-x} \hat{\varphi} \rangle$ can be found in \cite[Lemma 3.1.1]{grochenig_book}. Note now that the statement $V_\varphi(x, \omega) = 0$ if $\omega \not\in E + B_R$ holds if $\xi \in E, \omega \not \in E+B_R \implies \xi-\omega \not\in B_R$ which in turn is easily verified.

        Next since the support of $\mu$ is contained in $B_R$, this time a $2d$-dimensional ball, we can conclude that convolution $\mu * V_\varphi \psi$ has support in the last $d$ variables contained in $E + B_{2R}$.

        Finally for the synthesized signals $B_\mu^\varphi \psi_n$, we claim that if $F \in L^1(\R^{2d})$ has support in the last $d$ variables contained in $E + B_{2R}$, it follows that the support of $\widehat{V_\varphi^* F}$ is contained in $E + B_{3R}$. To see this, we use Lemma \ref{lemma:fourier_of_synthesis} and expand as
        \begin{align*}
            \widehat{V_\varphi^* F}(\xi) &= \int_{\R^{2d}} F(x, \omega) \hat{\varphi}(\xi - \omega)  e^{-2\pi i x\cdot(\xi - \omega)}\,dx\,d\omega\\
            &= \int_{\R^d} \left(\int_{E + B_{2R}} F(x,\omega) \hat{\varphi}(\xi-\omega) e^{-2\pi i x\cdot (\xi-\omega)}\,d\omega\right)\,dx
        \end{align*}
        We claim that this quantity is zero whenever $\xi \not\in E+B_{3R}$. Indeed, $\omega \in E + B_{2R},\,\xi \not\in E + B_{3R} \implies \xi-\omega \not\in B_R$ as is easily verified.

        By these three steps we have shown that the support of the Fourier transform of $B_\mu^\varphi \psi_n = V_\varphi^* (\mu * V_\varphi \psi_n)$ is contained in $\supp{\psi_n} + B_{3R}$ and by the construction of $\psi_n$, these supports are disjoint for different $n$. Consequently, $\Vert B_\mu^\varphi \psi_n - B_\mu^\varphi \psi_m \Vert = \Vert B_\mu^\varphi \psi_n \Vert + \Vert B_\mu^\varphi \psi_m \Vert$ whenever $n \neq m$. We claim that this norm is unchanged when $\psi$ is modulated. Indeed, as we saw in \eqref{eq:modulation_translation}, a modulation of the signal corresponds to a translation in phase space and convolutions respect translations. Hence the uniform boundedness follows from Lemma \ref{lemma:modulation_iso} since
        \begin{align*}
            \Vert B_\mu^\varphi \psi \Vert = \Vert P_{V_\varphi(L^2)} (\mu * V_\varphi \psi) \Vert_{L^2(\R^{2d})}
        \end{align*}
        by Moyal's formula \eqref{eq:moyal} and \eqref{eq:gabor_proj}. This means that
        \begin{align*}
            \Vert B_\mu^\varphi \psi_n - B_\mu^\varphi \psi_m \Vert = 2 \Vert B_\mu^\varphi \psi_c \Vert > 0
        \end{align*}
        for all $n \neq m$ and there can be no convergent subsequence of $(B_\mu^\varphi \psi_n)_n$.

        Now to lift the assumption of compactness of the supports of $\mu$ and $\hat{\varphi}$, we write $\mu = \mu_0 + \mu_1$ and $\varphi = \varphi_0 + \varphi_1$ where $\mu_0$ and $\hat{\varphi_0}$ have compact support and 
        \begin{align*}
        \Vert \mu_1 \Vert_{M(\R^{2d})} &< \min\left\{ \frac{\Vert B_{\mu_0}^{\varphi_0} \psi_c \Vert}{3 \Vert \varphi_0 \Vert^2},\, \sqrt{\frac{\Vert B_{\mu_0}^{\varphi_0} \psi_c \Vert}{3}} \right\},\\
        \Vert \varphi_1 \Vert^2 &< \min\left\{ \frac{\Vert B_{\mu_0}^{\varphi_0} \psi_c \Vert}{3\Vert \mu_0 \Vert_{M(\R^{2d})}},\, \sqrt{\frac{\Vert B_{\mu_0}^{\varphi_0} \psi_c \Vert}{3}} \right\}.
        \end{align*}
        We can then decompose the operator as
        \begin{align*}
            B_\mu^\varphi = B_{\mu_0}^{\varphi_0} + \underbrace{B_{\mu_0}^{\varphi_1} + B_{\mu_1}^{\varphi_0} + B_{\mu_1}^{\varphi_1}}_{=C}.
        \end{align*}
        Now by applying the above proof to $B_{\mu_0}^{\varphi_0}$, we get a sequence $(\psi_n)_n$, all with norm $1$, such that
        \begin{align*}
            \Vert B_{\mu_0}^{\varphi_0} \psi_n - B_{\mu_0}^{\varphi_0} \psi_m \Vert = 2 \Vert B_{\mu_0}^{\varphi_0} \psi_c \Vert.
        \end{align*}
        From here, we can apply the triangle inequality to find that
        \begin{align}\nonumber
            \Vert B_\mu^\varphi (\psi_n - \psi_m) \Vert &\geq \Vert B_{\mu_0}^{\varphi_0}(\psi_n - \psi_m) \Vert - \Vert C(\psi_n - \psi_m) \Vert\\\label{eq:uniform_sep_non_comp}
            &\geq 2\big(\Vert B_{\mu_0}^{\varphi_0} \psi_c \Vert - \Vert C \Vert_{B(L^2(\R^d))}\big)
        \end{align}
        and so it will follow that $(B_\mu^\varphi \psi_n)_n$ is uniformly bounded if we can bound the norm of $C$ from above by $\Vert B_{\mu_0}^{\varphi_0} \psi_c \Vert$. Indeed, by the bound in Proposition \ref{prop:bounded_op} and the conditions imposed on the norms of $\mu_1$ and $\varphi_1$ above,
        \begin{align*}
            \Vert C \Vert_{B(L^2(\R^d))} &\leq \Vert B_{\mu_0}^{\varphi_1}\Vert_{B(L^2(\R^d))} + \Vert B_{\mu_1}^{\varphi_0}\Vert_{B(L^2(\R^d))} + \Vert B_{\mu_1}^{\varphi_1} \Vert_{B(L^2(\R^d))}\\
            &< 3 \frac{\Vert B_{\mu_0}^{\varphi_0}\Vert}{3} = \Vert B_{\mu_0}^{\varphi_0}\Vert.
        \end{align*}
        Consequently, the right hand side of \eqref{eq:uniform_sep_non_comp} is positive and $(B_\mu^\varphi \psi_n)_n$ is uniformly separated, implying non-compactness.
    \end{proof}
    
    \section{Implementations and applications}\label{sec:numerical_implementation}
    \subsection{Examples}\label{sec:examples}
    We briefly discuss some specifics of implementing the time-frequency blurring operator as well as spectrogram blurring. The code used to produce the figures in this section can be found on GitHub\footnote{\url{https://www.github.com/SimonHalvdansson/Time-Frequency-Blurring-Operator}} where details around the kernels, windows and specific settings also are available.
    
    \subsubsection{STFT blurring}\label{sec:stft_blur}
    The purest implementation of \eqref{eq:main_def} consists of simply a STFT, a convolution operation and an inverse STFT. In code, we can implement this as a function which acts on the waveform of the signal and this is what we use as the augmentation operation in Section \ref{sec:data_aug}. For visualizing the result, we can use either the regular spectrogram or a log-mel spectrogram. Both versions can be seen in Figure \ref{fig:stft-conv_overview} and another example without mel rescaling is illustrated in Figure \ref{fig:non-mel-blur-op}.

    In general, the discrete time-frequency blurring operator significantly reduces the $\ell^2$ energy of the input waveform since the phase of the STFT varies rapidly. In the figures we have performed $0-1$ normalization on the spectrograms to compensate for this phenomenon.
    
    \begin{figure}
        \centering
        \includegraphics[width=\linewidth]{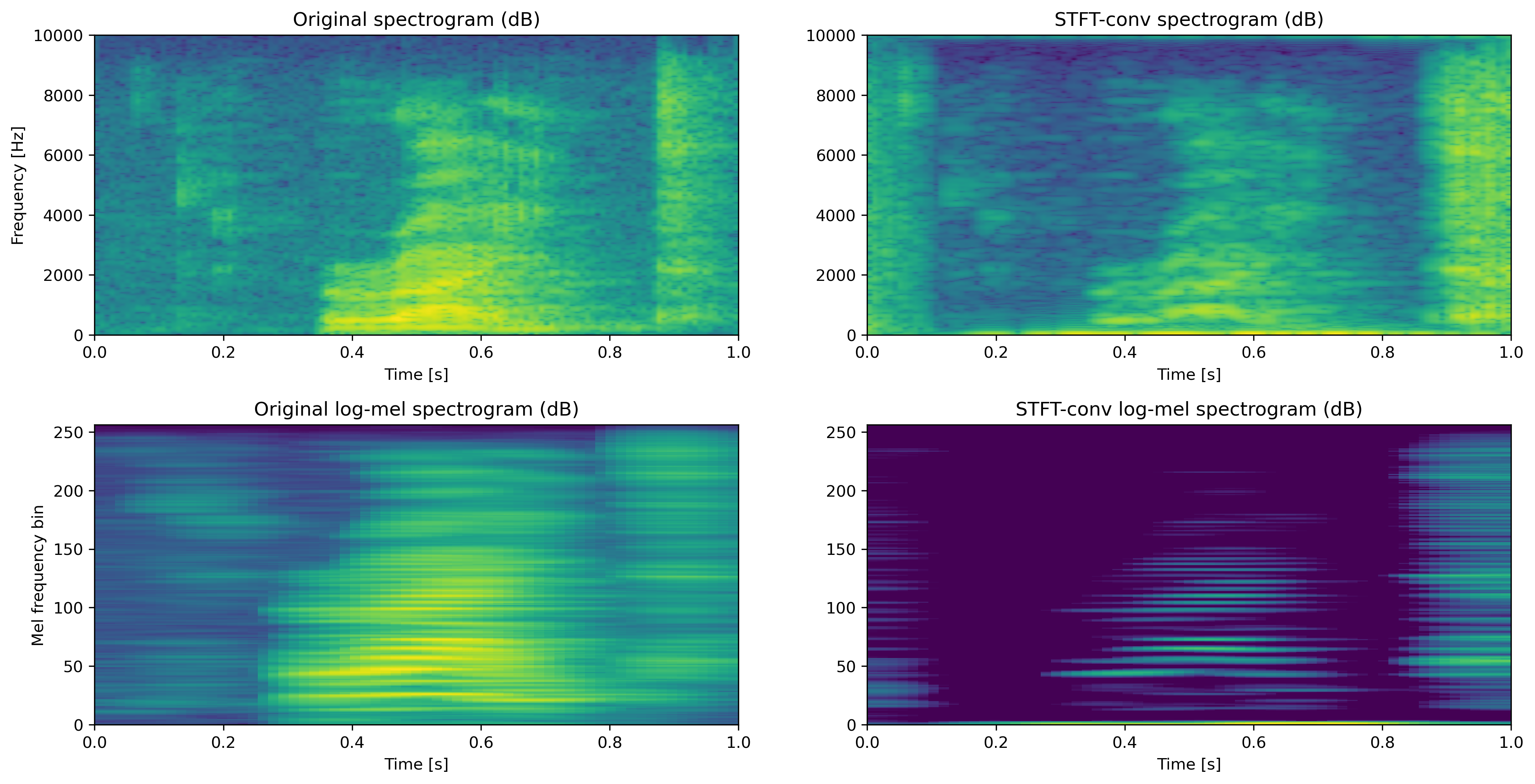}
        \caption{Spectrograms and log-mel spectrograms of an audio recording and the same recording with a time-frequency blurring operator with Gaussian kernel applied to it.}
        \label{fig:stft-conv_overview}
    \end{figure}

    \subsubsection{Spectrogram blurring}\label{sec:spec_blur}
    Spectrogram blurring as discussed in Section \ref{sec:spectrogram_blurring} is implemented by first computing the spectrogram, rescaling it to logarithmic decibel scale, and then applying a convolution. In Figure \ref{fig:spec-blur_overview}, we illustrate this on the spectrogram of an audio clip in both normal and mel scale. It is the mel-rescaled version which we use for data augmentation in Section \ref{sec:data_aug}.
    \begin{figure}
        \centering
        \includegraphics[width=\linewidth]{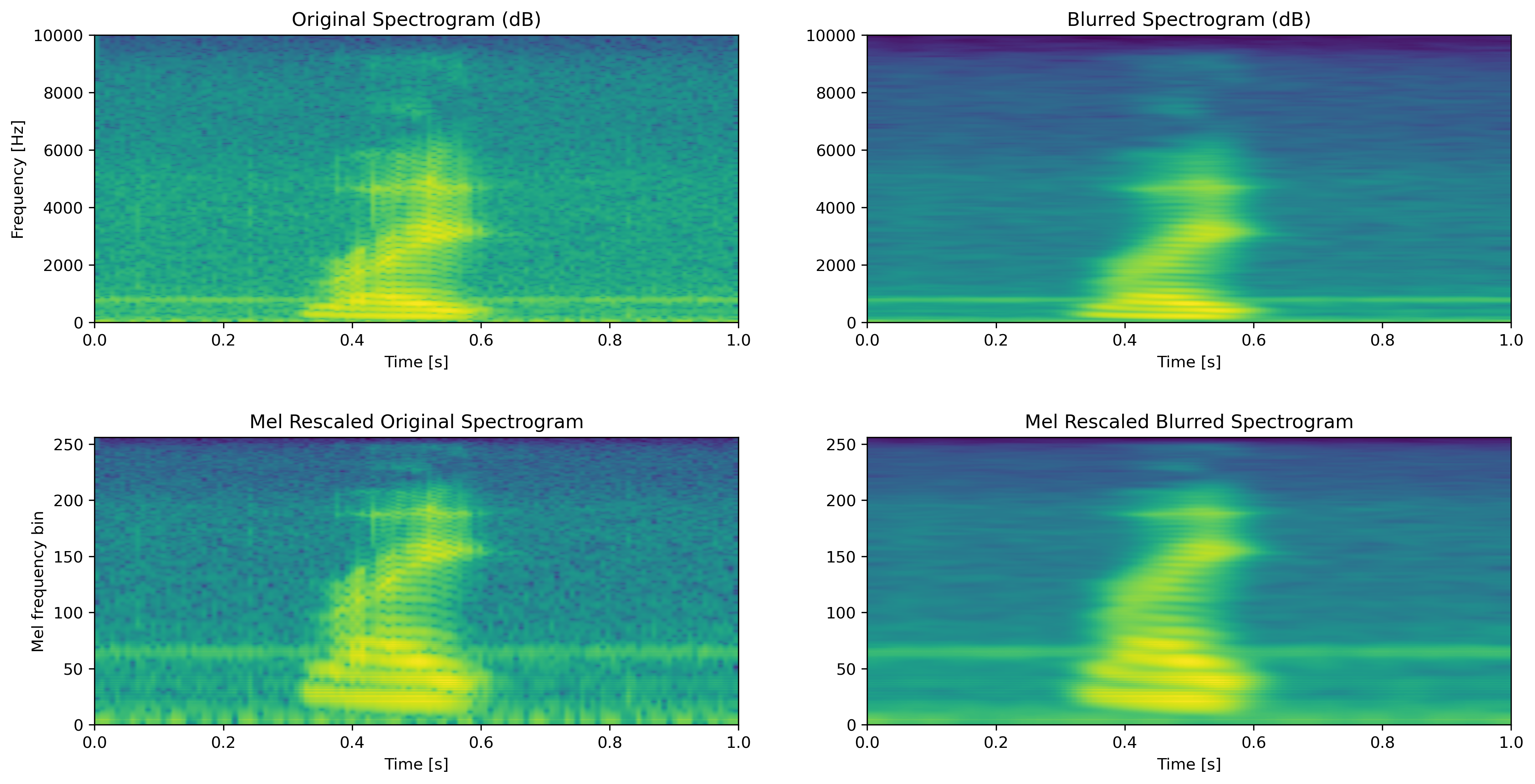}
        \caption{Spectrograms and log-mel spectrograms of an audio recording and the same spectrograms blurred with a Gaussian kernel.}
        \label{fig:spec-blur_overview}
    \end{figure}

    \subsection{Evaluation as a data augmentation method}\label{sec:data_aug}
    To investigate the performance of the blurring operator as well as spectrogram blurring for data augmentation, a convolutional neural network (CNN) using the ResNet-34 architecture \cite{He2016} and a vision transformer (ViT) using the TinyViT-11M architecture \cite{Wu2022} was trained on the SpeechCommands V2 dataset \cite{speechcommandsv2} for 35-class classification using different augmentation setups. These two models were chosen as they represent the two main contemporary paradigms for image recognition systems \cite{Dorfler2018, Gong2021}. The full code used to produce all the plots and tables of this section is available on GitHub\footnote{\url{https://www.github.com/SimonHalvdansson/Time-Frequency-Blurring-Operator}}.

    \begin{figure}\label{fig:data_augment_overview}
        \centering
        \includegraphics[width=\linewidth]{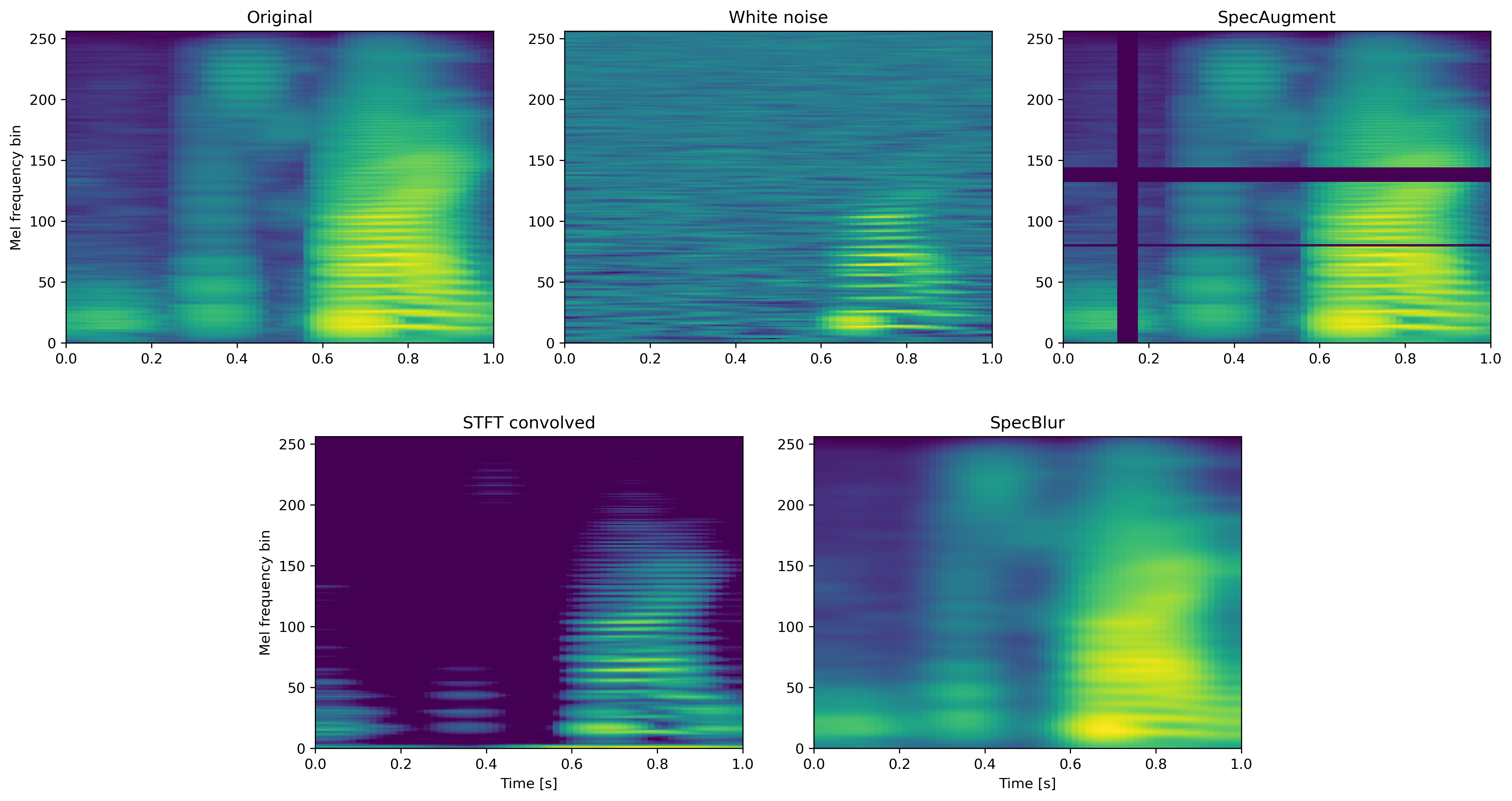}
        \caption{Log-mel spectrograms of an audio recording from the SpeechCommands V2 dataset \cite{speechcommandsv2} with different augmentation techniques applied to it.}
    \end{figure}

    \subsubsection{Setup}
    The SpeechCommands V2 dataset \cite{speechcommandsv2} contains audio recordings of utterances of 35 different words which we aim to classify using a neural network. As our focus is on data augmentation, we limit the number of training examples available to the model to learn from to 100, 300, 600 or 1000 recordings from each class. We use an $80/20$ train/validation split and use the validation set to decide when to stop training. We always use 200 examples from each class as the test data set.

    For preprocessing, all recordings are padded to be 1 second long and then the log-mel spectrogram of resolution $63 \times 256$ (time $\times$ frequency) is computed and used as input to the network. As the purpose is to evaluate the effect of using STFT convolving and spectrogram blurring for data augmentation, we also use different augmentation methods to compare against. Specifically, we consider the augmentation techniques of adding white noise to the waveform and performing basic time and frequency masking as described in the SpecAugment paper \cite{Park2019}. Finally for each of the different counts of input examples, we evaluate the model with no augmentation, all augmentations, once for each augmentation, once with only white noise and SpecAugment and once with both blurring augmentations, making for a total of 8 reported accuracies per input size. 
    
    There is a natural variation to the test accuracies as we pick the dataset which we train on randomly and randomly initialize the weights. We therefore train the network several times with different randomly selected datasets each times. In order to compare augmentation methods, we estimate the mean of the accuracies for each configuration. Here the \emph{standard error} quantifies our uncertainty in the mean and is given by $\sigma/\sqrt{n}$ where $\sigma$ is the natural standard deviation of the distribution we sample from and $n$ is the number of samples. We estimate $\sigma$ as the standard deviation of the accuracies we record over the $n$ training runs. For each configuration the training and test procedures are repeated until the estimated standard errors are so small that we can compare the accuracies of different augmentation methods. 

    For both the time-frequency blurring operator and spectrogram blurring, we use a two-dimensional Gaussian kernel with manually fine-tuned spread in time and frequency, see the code for exact details. Spectrogram blurring is performed on the log-mel spectrogram as in Section \ref{sec:spec_blur} while the time-frequency blurring operator is applied with a non-rescaled STFT as in Section \ref{sec:stft_blur}. We stress that the exact details of the implementation should be considered secondary as we are mainly interested in comparing the novel augmentation to the baseline and classical augmentation techniques. 
    
    \subsubsection{CNN results}
    We summarize the results from 1780 runs with the ResNet-34 CNN \cite{He2016} in Table \ref{table:cnn_results} which can be reproduced using the code in the GitHub repository.
    \begin{table}[H]
        \caption{Average CNN test accuracies with standard errors (\%) for 100, 300, 600 and 1000 input examples per class for different augmentation setups.}
        \begin{tabular}{l|llll}
            \toprule
            \textbf{Augmentation}   & \textbf{Acc-100}  & \textbf{Acc-300}  & \textbf{Acc-600}  & \textbf{Acc-1000}\\ \midrule
None & $47.05 \pm 0.26$ & $75.13 \pm 0.38$ & $83.13 \pm 0.25$ & $87.27 \pm 0.19$\\
White noise & $62.67 \pm 0.48$ & $78.10 \pm 0.27$ & $84.74 \pm 0.33$ & $88.77 \pm 0.21$\\
SpecAugment & $67.70 \pm 0.41$ & $80.77 \pm 0.19$ & $85.86 \pm 0.18$ & $88.99 \pm 0.16$\\
STFT-blur & $63.31 \pm 0.50$ & $78.69 \pm 0.29$ & $84.90 \pm 0.27$ & $88.88 \pm 0.18$\\
SpecBlur & $64.50 \pm 0.45$ & $80.38 \pm 0.20$ & $85.83 \pm 0.25$ & $88.91 \pm 0.29$\\
White noise + SpecAug & $66.57 \pm 0.34$ & $81.62 \pm 0.19$ & $87.38 \pm 0.19$ & $89.90 \pm 0.14$\\
STFT-blur + SpecBlur & $68.24 \pm 0.30$ & $81.67 \pm 0.19$ & $86.45 \pm 0.20$ & $89.36 \pm 0.15$\\
All & $70.48 \pm 0.23$ & $83.17 \pm 0.15$ & $87.98 \pm 0.11$ & $90.51 \pm 0.14$\\\bottomrule 
        \end{tabular}
        \label{table:cnn_results}
    \end{table}
    We see the greatest improvements compared to the baseline for the lower example configurations which is to be expected. Notably, there is a significant improvement for both STFT-blur and SpecBlur compared to the baseline for all training sizes ($p < 0.001$) as well as a significant improvement going from white noise + SpecAugment to all augmentations ($p < 0.05$). Combining STFT-blur with SpecBlur offers modest improvements compared to just using one of them. However, this could be because the augmentation is applied to each example in the training set leading to some overfitting in the direction of blurred spectrograms. 

    \subsubsection{ViT results}
    The experiments was repeated with a TinyViT-11M vision transformer \cite{Wu2022} and the results of 1310 runs are in Table \ref{table:vit_results}. The code to produce the table can be found in the main GitHub repository.
    \begin{table}[H]
        \caption{Average ViT test accuracies with standard errors (\%) for 100, 300, 600 and 1000 input examples per class for different augmentation setups.}
        \begin{tabular}{l|llll}
            \toprule
            \textbf{Augmentation}   & \textbf{Acc-100}  & \textbf{Acc-300}  & \textbf{Acc-600}  & \textbf{Acc-1000}\\ \midrule
None & $25.85 \pm 0.29$ & $71.15 \pm 0.46$ & $84.32 \pm 0.23$ & $89.17 \pm 0.20$\\
White noise & $41.64 \pm 0.32$ & $80.84 \pm 0.22$ & $87.94 \pm 0.15$ & $90.72 \pm 0.09$\\
SpecAugment & $46.97 \pm 0.33$ & $81.26 \pm 0.22$ & $87.55 \pm 0.08$ & $90.61 \pm 0.14$\\
STFT-blur & $50.46 \pm 0.28$ & $81.00 \pm 0.24$ & $87.56 \pm 0.18$ & $90.40 \pm 0.15$\\
SpecBlur & $52.67 \pm 0.30$ & $84.08 \pm 0.14$ & $89.00 \pm 0.12$ & $91.29 \pm 0.13$\\
White noise + SpecAug & $56.61 \pm 0.33$ & $84.46 \pm 0.15$ & $89.61 \pm 0.15$ & $91.80 \pm 0.15$\\
STFT-blur + SpecBlur & $67.54 \pm 0.29$ & $85.65 \pm 0.16$ & $89.22 \pm 0.17$ & $91.72 \pm 0.12$\\
All & $73.38 \pm 0.19$ & $86.89 \pm 0.14$ & $90.60 \pm 0.13$ & $92.70 \pm 0.08$\\\bottomrule 
        \end{tabular}
        \label{table:vit_results}
    \end{table}
    As vision transformers generally are more sensitive to the amount of training data, we see larger accuracy gains from using augmentation than in the CNN case. The improvement over baseline for STFT-blur and SpecBlur is statistically significant for all training sizes ($p < 0.001$). STFT-blur and SpecBlur also provide statistically significant improvements when comparing white noise together with SpecAugment against all augmentation methods combined ($p < 0.001$). While STFT-blur + SpecBlur consistently outperformed SpecBlur alone, when only using one of the two augmentation methods, SpecBlur performed better ($p < 0.001$).

    \subsubsection{Conclusions}
    We have seen that both STFT-blur and SpecBlur are promising augmentation techniques that can be used to improve the performance of both convolutional neural networks and vision transformers on spectrograms, at least at smaller scales. The computationally more efficient SpecBlur generally performs better than STFT-blur alone but combining several augmentation methods results in superior performance.

    \subsection*{Acknowledgements}
    This project was partially supported by the Project Pure Mathematics in Norway, funded by Trond Mohn Foundation and Tromsø Research Foundation.

    \printbibliography

@book{grochenig_book,
	doi = {10.1007/978-1-4612-0003-1},
	year = {2001},
	publisher = {Birkh\"{a}user Boston},
	author = {K. Gr\"{o}chenig},
	title = {Foundations of Time-Frequency Analysis}
}

@article{Balazs2024,
  doi = {10.1016/j.jmaa.2023.127579},
  year = {2024},
  publisher = {Elsevier {BV}},
  volume = {529},
  number = {1},
  pages = {127579},
  author = {Peter Balazs and Federico Bastianoni and Elena Cordero and Hans G. Feichtinger and Nina Schweighofer},
  title = {Comparisons between {Fourier} and {STFT} multipliers: The smoothing effect of the short-time {Fourier} transform},
  journal = {J. Math. Anal. Appl.}
}

@article{Luef2018,
  doi = {10.1016/j.matpur.2017.12.004},
  year = {2018},
  publisher = {Elsevier {BV}},
  volume = {118},
  pages = {288--316},
  author = {Franz Luef and Eirik Skrettingland},
  title = {Convolutions for localization operators},
  journal = {J. Math. Pures Appl.}
}

@article{Luef2019,
  doi = {10.1007/s00041-019-09663-3},
  year = {2019},
  publisher = {Springer Science and Business Media {LLC}},
  volume = {25},
  number = {4},
  pages = {2064--2108},
  author = {Franz Luef and Eirik Skrettingland},
  title = {Mixed-State Localization Operators: Cohen's Class and Trace Class Operators},
  journal = {J. Fourier Anal. Appl.}
}

@article{Luef2019_acc,
  doi = {10.1007/s00365-019-09465-2},
  year = {2019},
    journal = {Constr. Approx.},
  publisher = {Springer Science and Business Media {LLC}},
  volume = {52},
  number = {1},
  pages = {31--64},
  author = {Franz Luef and Eirik Skrettingland},
  title = {On Accumulated {C}ohen's Class Distributions and Mixed-State Localization Operators}
}

@article{Werner1984,
  doi = {10.1063/1.526310},
  year = {1984},
  publisher = {{AIP} Publishing},
  volume = {25},
  number = {5},
  pages = {1404--1411},
  author = {R. Werner},
  title = {Quantum harmonic analysis on phase space},
  journal = {Journal of Mathematical Physics}
}

@article{FEICHTINGER1989307,
title = {Banach spaces related to integrable group representations and their atomic decompositions, {I}},
journal = {J. Funct. Anal.},
volume = {86},
number = {2},
pages = {307-340},
year = {1989},
doi = {10.1016/0022-1236(89)90055-4},
author = {Hans G Feichtinger and K.H Gröchenig},
}

@article{Luef2021,
  doi = {10.1016/j.jfa.2020.108883},
  year = {2021},
  publisher = {Elsevier {BV}},
  volume = {280},
  number = {6},
  pages = {108883},
  author = {Franz Luef and Eirik Skrettingland},
  title = {A {Wiener} {Tauberian} theorem for operators and functions},
  journal = {J. Funct. Anal.}
}

@article{daubechies1988_loc,
  doi = {10.1109/18.9761},
  year = {1988},
  publisher = {Institute of Electrical and Electronics Engineers ({IEEE})},
  volume = {34},
  number = {4},
  pages = {605--612},
  author = {I. Daubechies},
  title = {Time-frequency localization operators: a geometric phase space approach},
  journal = {IEEE Trans. Inform. Theory}
}

@article{Cordero2003,
  doi = {10.1016/s0022-1236(03)00166-6},
  year = {2003},
  publisher = {Elsevier {BV}},
  volume = {205},
  number = {1},
  pages = {107--131},
  author = {Elena Cordero and Karlheinz Gr\"{o}chenig},
  title = {Time{\textendash}Frequency analysis of localization operators},
  journal = {J. Funct. Anal.}
}

@incollection{CORDERO2007,
  doi = {10.1142/9789812770707_0005},
  year = {2007},
  publisher = {World {S}cientific},
  pages = {83--110},
  author = {Elena Cordero and Luigi Rodino and Karlheinz Gr\"{o}chenig},
  title = {LOCALIZATION OPERATORS AND TIME-FREQUENCY ANALYSIS},
  booktitle = {Harmonic,  Wavelet and P-Adic Analysis}
}

@book{Wong2002,
  doi = {10.1007/978-3-0348-8217-0},
  year = {2002},
  publisher = {Birkh\"{a}user Basel},
  author = {M. W. Wong},
  title = {Wavelet Transforms and Localization Operators}
}

@article{Guo2015,
  doi = {10.1007/s00041-015-9424-z},
  year = {2015},
  publisher = {Springer Science and Business Media {LLC}},
  volume = {22},
  number = {2},
  pages = {427--461},
  author = {Weichao Guo and Dashan Fan and Huoxiong Wu and Guoping Zhao},
  title = {Sharpness of Complex Interpolation on $\alpha$-Modulation Spaces},
  journal = {J. Fourier Anal. Appl.}
}

@InProceedings{papakipos2022augly,
    author    = {Papakipos, Zo\"e and Bitton, Joanna},
    title     = {AugLy: Data Augmentations for Adversarial Robustness},
    booktitle = {Proceedings of the IEEE/CVF Conference on Computer Vision and Pattern Recognition (CVPR) Workshops},
    year      = {2022},
    pages     = {156-163},
    doi       = {10.1109/CVPRW56347.2022.00027}
}

@inproceedings{Park2019,
  doi = {10.21437/interspeech.2019-2680},
  year = {2019},
  publisher = {{ISCA}},
  author = {Daniel S. Park and William Chan and Yu Zhang and Chung-Cheng Chiu and Barret Zoph and Ekin D. Cubuk and Quoc V. Le},
  title = {{SpecAugment}: A Simple Data Augmentation Method for Automatic Speech Recognition},
  booktitle = {Interspeech 2019}
}

@inproceedings{McFee2015,
    title = "A software framework for musical data augmentation",
    author  = {McFee, B. and Humphrey, E.J. and Bello, J.P.},
    year = "2015",
    pages = "248--254",
    booktitle = {16th International Society for Music Information Retrieval Conference},
    series  = {ISMIR}
}

@misc{audiomentations,
    title = "Audiomentations",
    author = "Iver Jordal",
    year = {2024},
    publisher = {GitHub},
    journal = {GitHub repository},
    howpublished = {\url{https://github.com/iver56/audiomentations}}
}

@misc{sigment,
    title = "Sigment",
    author = "Edwin Onuonga",
    year = {2020},
    publisher = {GitHub},
    journal = {GitHub repository},
    howpublished = {\url{https://github.com/eonu/sigment}}
}

@misc{nlpaug,
    title = "{nlpaug}",
    author = "Edward Ma",
    year = {2022},
    publisher = {GitHub},
    journal = {GitHub repository},
    howpublished = {\url{https://github.com/makcedward/nlpaug}}
}

@INPROCEEDINGS{wavaugment2020,
  author={Kharitonov, Eugene and Rivière, Morgane and Synnaeve, Gabriel and Wolf, Lior and Mazaré, Pierre-Emmanuel and Douze, Matthijs and Dupoux, Emmanuel},
  booktitle={2021 IEEE Spoken Language Technology Workshop (SLT)}, 
  title={Data Augmenting Contrastive Learning of Speech Representations in the Time Domain}, 
  year={2021},
  pages={215-222},
  doi={10.1109/SLT48900.2021.9383605}}

@article{Rommel2022,
  doi = {10.1088/1741-2552/aca220},
  year = {2022},
  publisher = {{IOP} Publishing},
  volume = {19},
  number = {6},
  pages = {066020},
  author = {C{\'{e}}dric Rommel and Joseph Paillard and Thomas Moreau and Alexandre Gramfort},
  title = {Data augmentation for learning predictive models on {EEG}: a systematic comparison},
  journal = {J. Neural Eng.}
}

@INPROCEEDINGS{Wang2019,
  author={Wang, Jisung and Kim, Sangki and Lee, Yeha},
  booktitle={ICASSP 2019 - 2019 IEEE International Conference on Acoustics, Speech and Signal Processing (ICASSP)}, 
  title={Speech {Augmentation} {Using} {Wavenet} in {Speech} {Recognition}}, 
  year={2019},
  pages={6770-6774},
  doi={10.1109/ICASSP.2019.8683388}}

@inproceedings{Wang2021spec,
  doi = {10.21437/interspeech.2021-140},
  year = {2021},
  publisher = {{ISCA}},
  author = {Helin Wang and Yuexian Zou and Wenwu Wang},
  title = {{SpecAugment}++: A Hidden Space Data Augmentation Method for Acoustic Scene Classification},
  booktitle = {Interspeech 2021}
}

@article{Wei2020,
  doi = {10.1088/1742-6596/1453/1/012085},
  year = {2020},
  publisher = {{IOP} Publishing},
  volume = {1453},
  number = {1},
  pages = {012085},
  author = {Shengyun Wei and Shun Zou and Feifan Liao and Weimin Lang},
  title = {A Comparison on Data Augmentation Methods Based on Deep Learning for Audio Classification},
  journal = {Journal of Physics: Conference Series}
}

@article{raju2018data,
    title={Data augmentation for robust keyword spotting under playback interference},
    author={Raju, Anirudh and Panchapagesan, Sankaran and Liu, Xing and Mandal, Arindam and Strom, Nikko},
    year={2018},
    eprint={1808.00563},
    archivePrefix={arXiv},
    primaryClass={cs.CL}
}

@article{Griffin1984,
  title = {Signal estimation from modified short-time {Fourier} transform},
  volume = {32},
  DOI = {10.1109/tassp.1984.1164317},
  number = {2},
  journal = {IEEE Trans. Acoust., Speech, Signal Process.},
  publisher = {Institute of Electrical and Electronics Engineers (IEEE)},
  author = {Griffin,  D. and Jae Lim},
  year = {1984},
  pages = {236–243}
}

@article{AbayomiAlli2022,
  doi = {10.3390/electronics11223795},
  year = {2022},
  publisher = {{MDPI} {AG}},
  volume = {11},
  number = {22},
  pages = {3795},
  author = {Olusola O. Abayomi-Alli and Robertas Dama{\v{s}}evi{\v{c}}ius and Atika Qazi and Mariam Adedoyin-Olowe and Sanjay Misra},
  title = {Data Augmentation and Deep Learning Methods in Sound Classification: A Systematic Review},
  journal = {Electronics}
}

@article{Zhang2020,
    Author = {Yu Zhang and James Qin and Daniel S. Park and Wei Han and Chung-Cheng Chiu and Ruoming Pang and Quoc V. Le and Yonghui Wu},
    Title = "{Pushing the Limits of Semi-Supervised Learning for Automatic Speech Recognition}",
    Year = {2020},
    eprint={2010.10504},
    archivePrefix={arXiv},
    primaryClass={eess.AS}
}

@InProceedings{Chen2022,
  title = 	 {{BEAT}s: Audio Pre-Training with Acoustic Tokenizers},
  author =       {Chen, Sanyuan and Wu, Yu and Wang, Chengyi and Liu, Shujie and Tompkins, Daniel and Chen, Zhuo and Che, Wanxiang and Yu, Xiangzhan and Wei, Furu},
  booktitle = 	 {Proceedings of the 40th International Conference on Machine Learning},
  pages = 	 {5178--5193},
  year = 	 {2023},
  editor = 	 {Krause, Andreas and Brunskill, Emma and Cho, Kyunghyun and Engelhardt, Barbara and Sabato, Sivan and Scarlett, Jonathan},
  volume = 	 {202},
  series = 	 {Proceedings of Machine Learning Research},
  publisher =    {PMLR},
}

@incollection{Vygon2021,
  doi = {10.1007/978-3-030-87802-3_69},
  year = {2021},
  publisher = {Springer International Publishing},
  pages = {773--785},
  author = {Roman Vygon and Nikolay Mikhaylovskiy},
  title = {Learning Efficient Representations for Keyword Spotting with Triplet Loss},
  booktitle = {Speech and Computer}
}

@inproceedings{dosovitskiy2021,
	title	= {An Image is Worth 16x16 Words: Transformers for Image Recognition at Scale},
	author	= {Alexander Kolesnikov and Alexey Dosovitskiy and Dirk Weissenborn and Georg Heigold and Jakob Uszkoreit and Lucas Beyer and Matthias Minderer and Mostafa Dehghani and Neil Houlsby and Sylvain Gelly and Thomas Unterthiner and Xiaohua Zhai},
	booktitle={International Conference on Learning Representations},
	year	= {2021}
}

@article{Halvdansson2023,
  doi = {10.1016/j.jfa.2023.110096},
  year = {2023},
  publisher = {Elsevier {BV}},
  volume = {285},
  number = {8},
  pages = {110096},
  author = {Simon Halvdansson},
  title = {Quantum harmonic analysis on locally compact groups},
  journal = {J. Funct. Anal.}
}

@book{Wong1998,
  doi = {10.1007/b98973},
  year = {1998},
  publisher = {Springer-Verlag},
  series = {Universitext},
  author =  {M. W. Wong},
  title = {Weyl Transforms}
}

@inproceedings{Gong2021,
  doi = {10.21437/interspeech.2021-698},
  year = {2021},
  publisher = {{ISCA}},
  author = {Yuan Gong and Yu-An Chung and James Glass},
  title = {{AST}: {Audio} {Spectrogram} {Transformer}},
  booktitle = {Interspeech 2021}
}

@article{speechcommandsv2,
    author = { {Warden}, P.},
    title = "{Speech Commands: A Dataset for Limited-Vocabulary Speech Recognition}",
    archivePrefix = "arXiv",
    eprint = {1804.03209},
    primaryClass = "cs.CL",
    year = 2018,
}

@inproceedings{He2016,
  doi = {10.1109/cvpr.2016.90},
  year = {2016},
  publisher = {{IEEE}},
  author = {Kaiming He and Xiangyu Zhang and Shaoqing Ren and Jian Sun},
  title = {Deep Residual Learning for Image Recognition},
  booktitle = {2016 {IEEE} Conference on Computer Vision and Pattern Recognition ({CVPR})}
}

@misc{saber,
    title = "{SABER} - {Semi}-{Supervised} {Audio} {Baseline} for {Easy} {Reproduction}",
    author = "Arjun Variar",
    year = {2021},
    publisher = {GitHub},
    journal = {GitHub repository},
    howpublished = {\url{https://github.com/SABER-labs/SABER}}
}

@article{Hlawatsch1992,
  doi = {10.1109/79.127284},
  year = {1992},
  publisher = {Institute of Electrical and Electronics Engineers ({IEEE})},
  volume = {9},
  number = {2},
  pages = {21--67},
  author = {F. Hlawatsch and G.F. Boudreaux-Bartels},
  title = {Linear and quadratic time-frequency signal representations},
  journal = {IEEE Signal Process. Mag.}
}

@article{Mack2020,
  doi = {10.1109/lsp.2019.2955818},
  year = {2020},
  publisher = {Institute of Electrical and Electronics Engineers ({IEEE})},
  volume = {27},
  pages = {61--65},
  author = {Wolfgang Mack and Emanuel A. P. Habets},
  title = {Deep Filtering: Signal Extraction and Reconstruction Using Complex Time-Frequency Filters},
  journal = {{IEEE} Signal Processing Letters}
}

@inproceedings{Dong2018,
  doi = {10.1109/icassp.2018.8462506},
  year = {2018},
  publisher = {{IEEE}},
  author = {Linhao Dong and Shuang Xu and Bo Xu},
  title = {Speech-Transformer: A No-Recurrence Sequence-to-Sequence Model for Speech Recognition},
  booktitle = {2018 {IEEE} International Conference on Acoustics,  Speech and Signal Processing ({ICASSP})}
}

@book{Suppappola2018,
  year = {2018},
  publisher = {{CRC} Press},
  editor = {Antonia Papandreou-Suppappola},
  title = {Applications in Time-Frequency Signal Processing}
}

@article{Williamson_2016,
    doi = {10.1109/taslp.2015.2512042},
    year = 2016,
    publisher = {Institute of Electrical and Electronics Engineers ({IEEE})},
    volume = {24},
    number = {3},
    pages = {483--492},
    author = {Donald S. Williamson and Yuxuan Wang and DeLiang Wang},
    title = {Complex Ratio Masking for Monaural Speech Separation},
    journal = {IEEE/ACM Trans. Audio, Speech, Language Process.}
}

@article{Dorfler2009,
  title = {Representation of Operators in the Time-Frequency Domain and Generalized {Gabor} Multipliers},
  volume = {16},
  DOI = {10.1007/s00041-009-9085-x},
  number = {2},
  journal = {J. Fourier Anal. Appl.},
  publisher = {Springer Science and Business Media LLC},
  author = {D\"{o}rfler,  Monika and Torrésani,  Bruno},
  year = {2009},
  pages = {261–293}
}

@article{Radford2022,
    title={Robust Speech Recognition via Large-Scale Weak Supervision},
    author={Alec Radford and Jong Wook Kim and Tao Xu and Greg Brockman and Christine McLeavey and Ilya Sutskever},
    year={2022},
    eprint={2212.04356},
    archivePrefix={arXiv},
    primaryClass={eess.AS}
}

@inproceedings{Dorfler2007,
    title={Spreading function representation of operators and Gabor multiplier approximation},
    author={D{\"o}rfler, Monika and Torr{\'e}sani, Bruno},
    booktitle={SAMPTA’07, Sampling Theory and Applications},
    year={2007},
    address={Thessaloniki, Greece},
    note={\url{https://hal.archives-ouvertes.fr/hal-00146274}}
}

@article{Dorfler2018,
  title = {Basic filters for convolutional neural networks applied to music: Training or design?},
  volume = {32},
  DOI = {10.1007/s00521-018-3704-x},
  number = {4},
  journal = {Neural Comput. Appl.},
  publisher = {Springer Science and Business Media LLC},
  author = {D\"{o}rfler,  Monika and Grill,  Thomas and Bammer,  Roswitha and Flexer,  Arthur},
  year = {2018},
  pages = {941–954}
}

@inbook{Wu2022,
  title = {{TinyViT}: Fast Pretraining Distillation for Small Vision Transformers},
  DOI = {10.1007/978-3-031-19803-8_5},
  booktitle = {Computer Vision – ECCV 2022},
  publisher = {Springer Nature Switzerland},
  author = {Wu,  Kan and Zhang,  Jinnian and Peng,  Houwen and Liu,  Mengchen and Xiao,  Bin and Fu,  Jianlong and Yuan,  Lu},
  year = {2022},
  pages = {68–85}
}

@article{Olivero2012,
author = {Olivero, Anaïk and Torrésani, Bruno and Depalle, Philippe and Kronland-Martinet, Richard},
year = {2012},
title = {Sound morphing strategies based on alterations of time-frequency representations by {Gabor} multipliers},
journal = {Proceedings of the AES International Conference},
url = {https://hal.science/hal-00682959}
}

@inproceedings{Giorgi2022,
  title={Mel Spectrogram Inversion with Stable Pitch},
  author={Bruno Di Giorgi and Mark Levy and Richard Sharp},
  booktitle={International Society for Music Information Retrieval Conference},
  year={2022},
  url = {https://arxiv.org/pdf/2208.12782}
}

@inproceedings{Prusa2017,
  title = {Phase vocoder done right},
  DOI = {10.23919/eusipco.2017.8081353},
  booktitle = {2017 25th European Signal Processing Conference (EUSIPCO)},
  publisher = {IEEE},
  author = {Prusa,  Zdenek and Holighaus,  Nicki},
  year = {2017},
}

@inproceedings{Schroter2022,
  title = {Deepfilternet: A Low Complexity Speech Enhancement Framework for Full-Band Audio Based On Deep Filtering},
  DOI = {10.1109/icassp43922.2022.9747055},
  booktitle = {ICASSP 2022 - 2022 IEEE International Conference on Acoustics,  Speech and Signal Processing (ICASSP)},
  publisher = {IEEE},
  author = {Schroter,  Hendrik and Escalante-B,  Alberto N. and Rosenkranz,  Tobias and Maier,  Andreas},
  year = {2022},
}
    
\end{document}